\numberwithin{equation}{section}
\newtheorem{theorem}{Theorem}[section]
\newtheorem{lemma}[theorem]{Lemma}
\newtheorem{remark}[theorem]{Remark}
\newtheorem{corollary}[theorem]{Corollary}
\newtheorem{proposition}[theorem]{Proposition}
\newtheorem{definition}[theorem]{Definition}
\newtheorem*{4.1}{Definition 4.1}
\newtheorem*{4.7}{Theoem 4.7}
\newtheorem*{5.2}{Theorem 5.2}
\newtheorem*{5.1}{Proposition 5.1}
\newtheorem*{5.3}{Theorem 5.3}
\newtheorem*{6.3}{Theorem 6.3}
\newtheorem*{7.8}{Corollary 7.8}
\newtheorem*{7.5}{Theorem 7.5}
\newtheorem*{7.7}{Corollary 7.7}
\newcommand{\la}{{\langle\,}}
\newcommand{\ra}{\,\rangle}
\newcommand{\id}{\mathrm{id}}
\newcommand{\tp}{{\widehat{\otimes}}}
\newcommand{\vtp}{{\,\overline{\otimes}\,}}
\newcommand{\btp}{{\,\overline{\boxtimes}\,}}
\newcommand{\h}{{\mathcal H}}
\newcommand{\bd}{{\mathcal{B}}}
\newcommand{\T}{{\mathcal{T}}}
\newcommand{\fee}{{\varphi}}
\newcommand{\s}{\text{span}}
\newcommand{\DD}{{{\ell}^{\infty}(\G)}}
\newcommand{\DDD}{{{\ell}^{\infty}(\hat\G)}}
\newcommand{\DO}{{{\ell}^{1}(\G)}}
\newcommand{\DT}{{{\ell}^{2}(\G)}}
\newcommand{\E}{{\mathcal{E}_\mu}}
\newcommand{\LLL}{{L^{\infty}(\hat\G)}}
\newcommand{\Deltaop}{{\hat{\Delta}}^{\text{{op}}}}
\newcommand{\G}{\mathbb G}
\title[Amenable Actions of Discrete Quantum Groups]
{Amenable Actions of Discrete Quantum Groups on von Neumann Algebras}
\author[M. S. M. Moakhar]{Mohammad S. M. Moakhar}
\address{Department of Mathematics,
        Tarbiat Modares University, Tehran 14115-134, Iran}
\email{m.mojahedi@modares.ac.ir}
\subjclass[2010]{Primary 46L89, 46L55; Secondary 46L07, 22D25.}
\begin{document}

\begin{abstract}
We introduce the notion of Zimmer amenability
for actions of discrete quantum groups on
von Neumann algebras. We prove generalizations of
several fundamental results of the theory in the noncommutative
case. In particular, we give a characterization of Zimmer
amenability of an action $\alpha:\Bbb G\curvearrowright N$
in terms of $\hat{\Bbb{G}}$-injectivity of the von Neumann
algebra crossed product $N\ltimes_\alpha\Bbb G$.
As an application we show that the actions of any
discrete quantum group on its Poisson boundaries are always
amenable.
\end{abstract}

\maketitle

\tableofcontents

\section{Introduction}\label{sect1}

There are many different equivalent 
conditions that characterize 
amenability of a locally compact group $G$.
One such characterization is in terms of a 
fixed point property of affine actions
of $G$.  
In \cite{ZIM}, Zimmer introduced 
the notion of amenable actions as a natural
generalization of this fixed point property. 
In subsequent work, Adams, Elliott and Giordano
characterized Zimmer amenability in terms of the existence
of an equivariant conditional expectation \cite{AEG}. 
In \cite{Del}, 
Delaroche extended Zimmer's definition to
the setting of group actions on von Neumann algebras.
In this paper, we introduce the notion of
Zimmer amenability for
actions of discrete quantum groups
on von Neumann algebras.
\begin{4.1}\label{intro,def}
	Let $\alpha : \G\curvearrowright N$ be an action 
    of a discrete quantum group $\G$ on a von Neumann 
    algebra $N$. Then $\alpha$ is called amenable 
    if there exists a conditional expectation
	$E_\alpha:\DD\vtp N\to \alpha(N)$ such that 
	\[
	(\id \otimes E_\alpha)(\Delta\otimes\id)=
	(\Delta\otimes\id)E_\alpha.
	\]
\end{4.1}
This definition coincides with 
Delaroche's definition \cite[D\'efinition. 3.4]{Del} when 
$G$ is a discrete group. Also observe that
a discrete quantum group is amenable
if and only if its action on the trivial space is amenable
in the above sense. We prove, similarly to the classical result,
the action of every discrete quantum group on itself
by its co-multiplication is amenable (Proposition \ref{itself}).
Moreover, we show a connection between
amenability of discrete quantum groups and amenability of their 
actions on von Neumann algebras which is 
in fact a noncommutative version of 
\cite[Proposition 3.6]{Del}:
\begin{4.7}
	Let $\alpha:\G\curvearrowright N$ be an 
	action of a 
	discrete quantum group $\G$ on a von Neumann algebra
	$N$. The following are equivalent:
	\begin{itemize}
	\item[1.] The quantum group $\G$ is amenable.
	\item[2.] The action $\alpha$ is amenable
	          and there exists an invariant state on $N$.
	\end{itemize}
\end{4.7}
In the case of Kac algebras, this theorem provides
a new characterization for amenability of $\G$ in terms of amenability
of the canonical action of $\G$ on its dual Kac algebra.
\begin{5.2}
    Let $\G$ be a discrete Kac algebra. Then 
    $\G$ is amenable if and only if the 
    canonical action of $\G$ on $\LLL$ is amenable. 
\end{5.2}
One of Zimmer's main motivations to introduce and study
the notion of amenable actions was the applications in the
theory of random walks on $G$ and their associated to
Poisson boundaries of $G$. He proved that for any $G$,
the action of $G$ on its Poisson boundaries
is always amenable \cite[Theorem 5.2]{ZIM}. 
We establish the noncommutative
analogue of this result in the case of 
discrete quantum group actions.
\begin{5.3}
	Let $\G$ be a discrete quantum group and let $\mu\in \DO$
	be a state. The canonical 
	action of $\G$ on the 
	Poisson boundary $\h_\mu$ is amenable.
\end{5.3}
In \cite{ZIM2}, Zimmer studied  
more properties of the amenable action and he characterized 
amenability of the action in terms of injectivity of the
corresponding crossed product \cite[Theorem 2.1]{ZIM2}.
In \cite{Del}, Delaroche generalized this result
to the case of actions on an arbitrary
von Neumann algebra. In fact she proved that
an action $\alpha:G\curvearrowright N$ is amenable if and only
if there exists a conditional expectation from
$B(L^2(G))\vtp N$ onto $N\ltimes_{\alpha}G$
\cite[Proposition 4.1]{Del}. She used this result to show
that amenability of the action on an injective
von Neumann algebra is equivalent to injectivity
of the corresponding crossed product. For discrete
quantum group actions, we will 
characterize Zimmer amenability in terms of
the existence of a conditional expectation that satisfies  
an equivariant condition coming from
induced $\hat\G$ action. More precisely we have
\begin{7.5}
	Let $\alpha:\G\curvearrowright N$ be an action of a 
	discrete quantum group $\G$ on a von Neumann algebra
	$N$. The following are equivalent:
	\begin{itemize}
		\item[1.] The action $\alpha$ is amenable.
		\item[2.] There is an equivariant conditional expectation
		\[
		\hspace{1cm}E:\big(B(\DT)\vtp N, \Deltaop\otimes\id)\to
		\big(N\ltimes_\alpha \G, \hat\alpha\big).
		\]
	\end{itemize}
\end{7.5}
As a direct consequence we will prove a noncommutative analogue of
\cite[Corollaire 4.2]{Del} for the general discrete quantum group actions.
\begin{7.7}
	Let $\alpha:\G\curvearrowright N$ be 
	an action of a discrete quantum group $\G$ 
	on a von Neumann algebra $N$. 
	The following are equivalent:
	\begin{itemize}
		\item [1.] The von Neumann algebra $N$ is 
		           injective and the action $\alpha$ is amenable.
		\item [2.] The crossed product $N\ltimes_\alpha\G$
		           is $\hat\G$-injective.
	\end{itemize}
\end{7.7}
In the case of the trivial action of 
$\G$ on the trivial space, 
Theorem \ref{characterization1} provides a duality
between amenability of $\G$ and injectivity of
the dual von Neumann algebra $\LLL$ in the 
category of $\T(\DT)$-modules where $\T(L_2(\G))$ is
the predual of $B(L_2(\G))$. This perfect duality was
initially investigated by Crann and Neufang in \cite{JN},
(see also \cite{Ja, JN2}).
\newline
Moreover in the case of discrete Kac algebra actions,
we will show that the equivariant condition 
in Theorem \ref{characterization1} can be eliminated.
In fact we have
\begin{6.3}
	Let $\alpha:\G\curvearrowright N$ be an action of a 
	discrete Kac algebra $\G$ on a von Neumann algebra 
	$N$. The following are equivalent:
	\begin{itemize}
	\item [1.] The action $\alpha$ is amenable.
	\item [2.] There is a conditional expectation from 
	           $B(\DT)\vtp N$ onto $N\ltimes_{\alpha}\G$.
	\end{itemize}
\end{6.3}

Beside this introduction, this paper includes six other sections.
In section 2, we recall some notions about discrete quantum groups
and their actions on von Neumann algebras. In section 3, we
construct the von Neumann algebra braided tensor product and 
we use this notion to obtain a version of
diagonal action in the setting of quantum groups. 
In section 4, we introduce 
the notion of amenable actions and we
study some of its properties.
In section 5, we give some examples of amenable actions.
In particular we prove that the action of any discrete 
quantum groups on any of its Poisson boundaries is 
amenable. In section 6, we study actions of discrete 
Kac algebras. The main
result of this section generalize the well-known
fact about the equivalence of amenability
of discrete Kac algebra $\G$ and injectivity of 
$\LLL$. In section 7, we consider the latter 
result in the case of discrete quantum group actions.
\\
\\
\\
\noindent
{\bf{Acknowledgement.}} We are grateful to
Massoud Amini for his continuous encouragement
throughout this project.
We would also like to thank 
Mehrdad Kalantar and Jason Crann for their helpful comments.

\section{Preliminaires}\label{sect2}

In this section we review some basic notions about discrete
quantum groups and their actions on von Neumann algebras.
A {\it discrete quantum group} 
$\G$ is a quadruple $(\DD, \Delta, \varphi, \psi)$, 
where $\DD=\bigoplus_{i\in I}M_{n_i}(\Bbb C)$ 
is a von Neumann algebra 
direct sum of matrix algebras,
$\Delta: \DD\to \DD \vtp \DD$ is a 
co-associative co-multiplication, and 
$\varphi$ and  $\psi$ are normal faithful semi-finite
left, respectively right, invariant weights on $\DD$, that is,
\begin{align*}
	\fee((\omega\otimes\id)\Delta(x))=\omega(1)\fee(x),
	\hspace{.5cm}x\in\mathcal M_\fee,\, \omega\in\DO,\\
    \psi((\id\otimes\omega)\Delta(x))=\omega(1)\psi(x),
	\hspace{.5cm}x\in\mathcal M_\psi,\, \omega\in\DO.
\end{align*}
A discrete quantum group $\G =(\DD, \Delta, \fee, \psi)$
is a \emph{Kac algebra}, if $\fee$ equals $\psi$
and is a trace.

The pre-adjoint of $\Delta$ induces  
an associative completely contractive multiplication 
\[
 *  \, :\,  f \otimes g \,\in\,
\DO\,\tp\, \DO\, \to \,f  *  g\, =\, (f \otimes g)\, \Delta\, \in\, \DO
\]
on $\DO$. Moreover, this maps induces left and right 
actions of $\DO$ on $\DD$ given by:
\begin{equation}\label{bimodule}
\mu *  x:=(\id\otimes\mu)\Delta(x),\hspace{.5cm}
x *  \mu:=(\mu\otimes\id)\Delta(x).
\end{equation}
For a fixed $\mu\in\DO$, the map $x\mapsto x * \mu$
is normal, completely bounded on $\DD$.
This map is
called the \emph{Markov} operator,
if $\mu$ is moreover a state.
A discrete 
quantum group $\G$ is said to be
{\it amenable} if there exists a state 
$m\in \DD^*$ satisfying
\[
\la m , x *  f\ra = \la f , 1\ra \la m , x\ra, \hspace{.5 cm}
 x\in \DD, \,f\in\DO.
\]

The corresponding GNS Hilbert spaces 
$\ell^2(\G,\varphi)$ and $\ell^2(\G,\psi)$ are 
isomorphic and are denoted by the same notation
$\DT$. The (left) fundamental unitary $W$ of $\G$ 
is a unitary operator on $\DT \otimes \DT$, 
satisfying the pentagonal relation
$W_{12}W_{13}W_{23}=W_{23}W_{12},$ in which 
we used the leg notation 
$W_{12}=W\otimes 1, W_{23}=1\otimes W$ and 
$W_{13}=(1\otimes\sigma)W_{12}$, 
where $\sigma(x\otimes y)= y\otimes x$ 
is the flip map on $B(H\otimes K).$ 
The right fundamental unitary $V$ 
with the same properties is 
defined in a similar way
on $\bd(\DT\otimes \DT)$.

Let $\mathcal T(\DT)$ be the predual of $B(\DT)$.
Define the von Neumann algebra $\LLL$ to be the weak*-closure of
$\{(\rho\otimes\id)W: \rho\in \mathcal T(\DT)\}$.
Consider the map $\hat\Delta:\LLL\to\LLL\vtp\LLL$ given by
$\hat\Delta(\hat x)=\hat W^*(1\otimes \hat x)\hat W$,
where $\hat W=\sigma W^*\sigma$. There exists a normal state
$\hat\fee$ on $\LLL$ which is
both invariant of left and right such that the triple
$\hat\G=(\LLL,\hat\Delta, \hat\fee)$ is a \emph{compact}
quantum group called the 
\emph{dual} quantum group of $\G$.

The opposite co-multiplication $\Deltaop$ is given by 
$\Deltaop=\sigma\circ\hat\Delta$. The fundamental unitary 
${\hat{W}}^{\text {op}}$ associated to $\Deltaop$ is defined
by ${\hat{W}}^{\text {op}} = \sigma \hat V \sigma$,
and therefore
${\hat{W}}^{\text {op}}\in \LLL\vtp\DD'$.

The fundamental unitary $W$ of $\G$ induces a co-associative 
co-multiplication on $B(\DT)$ defined by
\[
\Delta^\ell: T\in B(\DT)\mapsto W^*(1\otimes T)W\in B(\DT) \vtp B(\DT).
\]
It is clear that the restriction of $\Delta^\ell$ 
to $\DD$ is the original co-multiplication
$\Delta$ on $\DD$. 
The pre-adjoint of $\Delta^\ell$ induces associative 
completely contractive multiplication on 
the predual $\mathcal T(\DT)$.
\[
*:\omega\otimes \tau \in \T(\DT)\hat{\otimes}\T(\DT)\mapsto
\omega *\tau = \Delta^\ell_*(\omega\otimes \tau)\in \T(\DT).
\]
If $\la \T(\DT)*\T(\DT)\rangle$ denotes the linear span of
$\omega*\tau$ with $\omega,\tau\in\T(\DT)$ we have
\begin{equation}\label{traceclass}
\la \T(\DT)*\T(\DT)\rangle=\T(\DT).
\end{equation}
Similarly to the equations (\ref{bimodule}), there are
left and right actions of 
$\T(\DT)$ on $B(\DT)$.

There is also a co-associative 
co-multiplication on $B(\DT)$ induced by the
right fundamental unitary $V$ which is defined by
\[
\Delta^r: T\in B(\DT)\mapsto V(T\otimes 1)V^*\in B(\DT) \vtp B(\DT).
\]
In a same way, the pre-adjoint of $\Delta^r$ induces associative 
completely contractive multiplication on 
the predual $\mathcal T(\DT)$ with the property 
(\ref{traceclass}).

Let $\G$ be a discrete quantum group. By 
\cite[Proposition 4.2.18]{Ja}, there 
is a conditional expectation $E_0$ from $B(\DT)$ onto
$\DD$ such that 
for any $x\in B(\DT)$ and
$f\in \T(\DT)$, we have 
\begin{align}\label{covariant}
E_0 \big((f\otimes \id)\Delta^\ell(x)\big) = (f\otimes \id)\Delta^\ell(E_0(x)).
\end{align}
In particular, for any $\hat x\in \LLL$, 
$E_0(\hat x)=\hat\varphi(\hat x){\bf 1}$ 
where $\hat\varphi$ is 
the normal invariant state of the compact quantum group $\hat\G$.

A (left) 
action $\alpha : \G\curvearrowright N$
of a discrete quantum group $\G$ on a von Neumann algebra
$N$ is an injective $*$-homomorphism 
$\alpha: N \to \DD\vtp N$ satisfying 
\[
(\Delta\otimes\id)\,\alpha\,=\,(\id\otimes\alpha)\,\alpha\,.
\]
The action of dual quantum group $\hat\G$ is defined similarly.

Let $\alpha : \G\curvearrowright N$ be an action of 
the discrete quantum group $\G$
on the von Neumann algebra $N$. 
A state $\omega$ on $N$ is 
said to be {\it invariant} if 
\[
(f\otimes\omega)\alpha=\la f , 1\ra \omega.
\]
We denote by
$N^\alpha=\{x\in N:\alpha(x)=1\otimes x\}$ 
the {\it fixed point algebra} of the action 
$\alpha : \G\curvearrowright N$.
Let $\theta$ be a normal semi-finite faithful weight on $N$, 
and let $H_\theta$ be the $GNS$ Hilbert space of $\theta$.
It is proved in \cite[Theorem 4.4]{V} that $\alpha$
is implemented by a unitary $U_\alpha\in \DD\vtp B(H_\theta)$, that is,
\begin{equation}\label{1212}
\alpha(x)\, = \,U_\alpha\,(1\otimes x)\,U_\alpha^* \ \ \ (x\in N)\,.
\end{equation}

\begin{definition}\label{1}
	Let $\alpha : \G\curvearrowright N$ and 
	$\beta : \G\curvearrowright M$ be two actions of the discrete
	quantum group $\G$ on von Neumann algebras $N$ and $M$.
	Then a map $\Phi:N\to M$ is equivariant if 
	\[
	(\id\otimes \Phi)\alpha = \beta\circ \Phi.
	\]
	To indicate the actions, we say that 
the map $\Phi:(N, \alpha)\to (M, \beta)$ is equivariant, or that 
$\Phi$ is $(\alpha, \beta)$-equivariant. In the case $\alpha=\beta$,
we say that $\Phi$ is $\alpha$-equivariant.
\end{definition}
The (von Neumann algebra) crossed product of 
the action $\alpha : \G\curvearrowright N$ is
defined by
\[
\G\ltimes_\alpha N :=\{ \alpha(N)\cup(\LLL\vtp{\bf 1})\}''
\subseteq B(\DT)\vtp N.
\]
Analogously to the 
classical setting, there is a characterization 
of the crossed product $\G\ltimes_\alpha N$ 
as the fixed point algebra of a certain action of
$\G$ on $B(\DT)\vtp N$ as follows:

\begin{theorem}[\cite{ENOCK}, Theorem 11.6]\label{characterization}
Let $\alpha : \G\curvearrowright N$ be an action 
of a discrete quantum group $\G$ on a von Neumann 
algebra $N$ and let $\chi$ be the
flip map defined by $\chi(a\otimes b)=b\otimes a$. 
Then there is a left action
$\beta$ on the von Neumann algebra $B(\DT)\vtp N$ defined by
\[
\beta:x\in B(\DT)\vtp N\mapsto 
(\sigma V^*\sigma\otimes 1)\big((\chi\otimes1)(\id\otimes\alpha)(x)\big)
(\sigma V\sigma\otimes 1),
\]
such that
\[
\G\ltimes_\alpha N = (B(\DT)\vtp N)^\beta.
\]
\end{theorem}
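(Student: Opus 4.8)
The plan is to show that the crossed product $\G\ltimes_\alpha N$ coincides with the fixed point algebra $(B(\DT)\vtp N)^\beta$ for the action $\beta$ defined in the statement. First I would verify that $\beta$ is genuinely a left action of $\G$ on $B(\DT)\vtp N$, that is, that it is an injective normal $*$-homomorphism satisfying the coaction identity $(\Delta\otimes\id)\beta=(\id\otimes\beta)\beta$. Since $\beta$ is built by conjugating the composite $(\chi\otimes 1)(\id\otimes\alpha)$ by the unitary $\sigma V\sigma\otimes 1$, it is automatically a normal $*$-homomorphism; the substance is checking coassociativity. The plan here is to unwind the leg-numbering: $(\id\otimes\alpha)$ uses the action identity $(\Delta\otimes\id)\alpha=(\id\otimes\alpha)\alpha$, while the conjugating unitary $\sigma V\sigma$ satisfies a pentagonal relation inherited from $V$, and these two facts should combine to give the required coaction equation. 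I would treat the flip $\chi$ carefully, tracking which tensor leg it moves, since an error in bookkeeping here is the easiest place to go wrong.

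Next I would establish the inclusion $\G\ltimes_\alpha N\subseteq (B(\DT)\vtp N)^\beta$ by checking that the two families of generators are $\beta$-fixed. For the generators $\alpha(N)$, I would compute $\beta(\alpha(x))$ and show it equals $1\otimes\alpha(x)$; this reduces, after applying $(\id\otimes\alpha)$ and using the action identity, to a statement about how $V$ commutes with $\alpha$-type elements. For the generators $\LLL\vtp{\bf 1}$, the key input is that $\hat x\in\LLL$ is, by definition, of the form $(\rho\otimes\id)W$ and interacts with the right fundamental unitary $V$ through the commutation relations between $W$ and $V$; here I expect to use that $\LLL$ and $\DD'$ have a prescribed relationship via ${\hat W}^{\text{op}}=\sigma\hat V\sigma\in\LLL\vtp\DD'$. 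Since $\beta$ is normal and multiplicative and the fixed point algebra is weak*-closed, fixing the generators suffices to fix the von Neumann algebra they generate.

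The reverse inclusion $(B(\DT)\vtp N)^\beta\subseteq\G\ltimes_\alpha N$ is where I expect the main obstacle to lie. One clean approach is to observe that $\beta$ itself is (conjugate to) a crossed-product-type coaction and to identify its fixed point algebra directly via a concrete description of $\beta$-invariant elements; equivalently, one transports the problem through the unitary $\sigma V\sigma\otimes 1$ so that $\beta$ becomes the more transparent map $(\chi\otimes 1)(\id\otimes\alpha)$ and then characterizes the fixed points of that. This typically requires a Galois-type or biduality argument showing there are no invariant elements beyond those coming from $\alpha(N)$ and $\LLL\vtp{\bf 1}$. Since this is precisely the content of \cite[Theorem 11.6]{ENOCK}, the honest statement is that the hard half is imported from Enock's work; in the paper I would cite that theorem for the reverse inclusion and give the forward inclusion and the verification that $\beta$ is an action as the self-contained part. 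The subtlest computational step throughout is the correct placement of the flips $\sigma$ and $\chi$ relative to the three tensor legs of $B(\DT)\vtp B(\DT)\vtp N$, and I would reserve most of the written detail for making those leg-numbering identities explicit.
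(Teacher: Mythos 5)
The paper does not prove this statement: it is imported verbatim from Enock's work, with the citation \cite[Theorem 11.6]{ENOCK} serving as the entire justification, so there is no in-paper argument to compare against. Your proposal ultimately defers the substantive half (the reverse inclusion $(B(\DT)\vtp N)^\beta\subseteq\G\ltimes_\alpha N$) to that same citation, which is consistent with the paper's treatment; the supplementary verifications you sketch --- that $\beta$ is a coaction and that the generators $\alpha(N)$ and $\LLL\vtp{\bf 1}$ are $\beta$-fixed --- are standard and plausible, though the paper omits them entirely.
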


If $\alpha : \G\curvearrowright N$ is an action 
of a discrete quantum group $\G$ on a von Neumann 
algebra $N$, there is  is also a natural action $\hat\alpha$ of 
$(\hat\G, \Deltaop)$ on
$\G\ltimes_\alpha N$ which is called the dual action of $\alpha$
 and is defined by
\[
\begin{array}{rll}
\hat\alpha(\alpha(x)) & = & 
1\otimes \alpha(x),\,\,\,\,\, {\text {for all}}\,\, x\in N \\
\hat\alpha(\hat{x}\otimes 1) & = &
\Deltaop(\hat x)\otimes 1,\,\,\,\,\, {\text {for all}}\,\,  \hat x\in \LLL.\\
\end{array}
\]
In fact, we have 
$\alpha(N)=(N\ltimes_\alpha \G)^{\hat\alpha}$ \cite[Theorem 2.7]{V}.

\section{von Neumann algebra braided tensor products}\label{sect3}

In order to some technical obstacles we need to use
a version of diagonal action for discrete quantum group
actions. This section is devoted to a brief introduction
to Yetter--Drinfeld actions and braided tensor products
in von Neumann algebra setting. 
For an overview of these notions,
we refer to \cite{BSV} and \cite{NV}.

Let $\G=(\DD, \Delta, \varphi, \psi)$ be a discrete quantum group.
Consider the triple $(M, \beta, \gamma)$, where $M$ is a
von Neumann algebra on which $\beta$ and $\gamma$ of 
the discrete quantum group $\G$ and 
the dual quantum group $\hat\G$
act. We say $M$ is the $\G$-{\it{YD-algebra}} if the actions 
$\beta$ and $\gamma$ satisfy the following Yetter--Drinfeld condition:
\begin{equation}\label{YD}
(\text{ad}(W)\otimes \id)(\id\otimes\gamma)\beta\,=\,(\sigma\otimes\id)
(\id\otimes\beta)\gamma,
\end{equation}
where $\text{ad}(W)=W\cdot W^*$.

In this case, if $\alpha$ is any action of 
$\G$ on a von Neumann algebra $N$, then 
similarly to \cite[Proposition 8.3]{V2},
we have
\[
\overline{\s\{{\gamma(M)}_{12}{\alpha(N)}_{13}\}}^{\,\,\text {weak*}} \,=\,
\overline{\s\{{\alpha(N)}_{13}{\gamma(M)}_{12}\}}^{\,\,\text {weak*}}.
\]
Hence the weak*-closed linear span of 
$\{{\gamma(a)}_{12}{\alpha(b)}_{13}:a\in M, b\in N\}$ is a von Neumann
subalgebra of $B(\DT)\vtp M\vtp N$, which is called the
{\it{braided tensor product}} of von Neumann algebras $M$ and $N$,
and is denoted by $M\btp N$.
There is a $*$-homomorphism 
$\beta\boxtimes\alpha:M\btp N\to \DD\vtp(M\btp N)$ given by
\[
\beta\boxtimes\alpha(X)=W^*_{12}{U_\beta}_{13} (1\otimes X)
{U_\beta}^*_{13}W_{12},
\]
where the unitary operator $U_\beta$ implements 
the action $\beta$ by (\ref{1212}).\\
In particular, on the set of generators 
$\{{\gamma(M)}_{12}{\alpha(N)}_{13}\}$ we have
\begin{align*}
	(\beta\boxtimes\alpha)({\gamma(a)}_{12}{\alpha(b)}_{13})
&=\, W^*_{12}{U_\beta}_{13}\gamma(a)_{23}\alpha(b)_{24}{U^*_\beta}_{13}W_{12}\\
&=\, W^*_{12}{U_\beta}_{13}\gamma(a)_{23}{U^*_\beta}_{13}\alpha(b)_{24}W_{12}\\
&=\, W^*_{12}(\sigma\otimes\id)({U_\beta}_{23}\gamma(a)_{13}{U^*_\beta}_{23})\alpha(b)_{24}W_{12}\\
&=\, W^*_{12}{\big((\sigma\otimes\id)(\id\otimes\beta)
          \gamma(a)\big)}_{123}{\alpha(b)}_{24}W_{12}\\
&=\, W^*_{12}{\big((\sigma\otimes\id)(\id\otimes\beta)
          \gamma(a)\big)}_{123}W_{12}W^*_{12}\alpha(b)_{24}W_{12}\\
&=\, W^*_{12}{\big((\sigma\otimes\id)(\id\otimes\beta)
          \gamma(a)\big)}_{123}W_{12}
          {\big((\Delta\otimes\id)\alpha(b)\big)}_{124}\\
&=\, W^*_{12}W_{12}{\big((\id\otimes\gamma)\beta(a)\big)}_{123}W^*_{12}W_{12}
          {\big((\Delta\otimes\id)\alpha(b)\big)}_{124}\\
&=\,{\big((\id\otimes\gamma)\beta(a)\big)}_{123}
          {\big((\Delta\otimes\id)\alpha(b)\big)}_{124}\\
&=\,{\big((\id\otimes\gamma)\beta(a)\big)}_{123}
{\big((\id\otimes\alpha)\alpha(b)\big)}_{124}.
\end{align*}
Therefore 
\[
(\beta\boxtimes\alpha)({\gamma(a)}_{12}{\alpha(b)}_{13})=
{\big((\id\otimes\gamma)\beta(a)\big)}_{123}
{\big((\id\otimes\alpha)\alpha(b)\big)}_{124}.
\]
Now it is straightforward to check that the normal
$*$-homomorphism $\beta\boxtimes\alpha$
is in fact an action 
of the discrete quantum group $\G$ on 
the von Neumann algebra $M\btp N$.

If $L$ and $M$ are $\G$-YD-algebras and $N$ is a von Neumann
algebra on which $\G$ acts, then similarly to \cite{NV},
we can construct the braided tensor products
$(L\btp M)\btp N$ and $L\btp (M\btp N)$ and there is a
natural identification 
\begin{equation}\label{associative}
(L\btp M)\btp N\cong L\btp (M\btp N).
\end{equation}

For any discrete quantum group $\G$, there is
an action $\gamma:\hat\G\curvearrowright \DD$ given by 
\[
\gamma(x)=\hat{W}^*(1\otimes x)\hat{W}.
\]
Observe that 
\[
(\text{ad}(W)\otimes \id)(\id\otimes\gamma)\Delta(x)=(\sigma\otimes\id)
(\id\otimes\Delta)\gamma(x).
\]
It implies that the pair $(\Delta, \gamma)$ satisfies
the compatibility condition (\ref{YD}) and therefore 
$\DD$ is a $\G$-YD-algebra. In this 
paper, we always consider braided tensor products
whose first legs are $\DD$.

The following is the von Neumann algebraic version 
of \cite[Lemma 1.24]{BSV}. 
We included the proof for the
convenience of the reader.
\begin{lemma}\label{braided product}
Let $\alpha:\G\curvearrowright N$ be an action of a discrete quantum 
group $\G$ on a von Neumann algebra $N$. There exists an equivarinat
$*$-isomorphism 
\[
T_\alpha:(\DD\btp N , \Delta\boxtimes \alpha)\to
 (\DD\vtp N , \Delta\otimes \id)
\]
such that $T_\alpha( 1\boxtimes a)=\alpha(a)$ for all $a\in N$ and
$T_\alpha(x\boxtimes 1)=x\otimes 1$ for all $x\in \DD$.
\end{lemma}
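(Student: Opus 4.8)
The plan is to construct the map $T_\alpha$ explicitly on the generators of the braided tensor product $\DD \btp N$ and then verify that it is a well-defined $*$-isomorphism intertwining the two actions. Recall that $\DD \btp N$ is the weak*-closed linear span of elements $\gamma(x)_{12}\,\alpha(a)_{13}$ with $x \in \DD$, $a \in N$, living inside $B(\DT) \vtp \DD \vtp N$, where $\gamma(x) = \hat W^*(1 \otimes x)\hat W$ is the canonical $\hat\G$-action on $\DD$. I would begin by proposing the formula
\[
T_\alpha\big(\gamma(x)_{12}\,\alpha(a)_{13}\big) = x \cdot \alpha(a)
\]
where $x$ acts in the first leg of $\DD \vtp N$, i.e. the target should be $(x \otimes 1)\,\alpha(a) \in \DD \vtp N$. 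To see this is natural, note that the first leg $B(\DT)$ of the braided product is really auxiliary: conjugation by $\hat W$ in the $\gamma$-factor is what carries the braiding, and collapsing it should recover an honest tensor product. The desired boundary conditions $T_\alpha(1 \boxtimes a) = \alpha(a)$ and $T_\alpha(x \boxtimes 1) = x \otimes 1$ are then immediate once one identifies $1 \boxtimes a$ with $\alpha(a)_{13}$ and $x \boxtimes 1$ with $\gamma(x)_{12}$.

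The key steps, in order, are as follows. First, I would show that $T_\alpha$ is well-defined and a $*$-homomorphism by exhibiting it as the restriction of a concrete spatial map, most likely implemented by a unitary built from $\hat W$: the point is that the defining relation of the braided product mirrors, via the Yetter--Drinfeld condition \eqref{YD}, the tensor relations in $\DD \vtp N$, so conjugating away the first leg preserves products and adjoints. Second, I would verify multiplicativity directly on generators, using the commutation relations between $\gamma(x)_{12}$ and $\alpha(a)_{13}$ (which live in disjoint-enough legs) together with the coassociativity $(\Delta \otimes \id)\alpha = (\id \otimes \alpha)\alpha$. Third, I would construct the inverse explicitly, sending $x \otimes 1 \mapsto \gamma(x)_{12}$ and $\alpha(a) \mapsto \alpha(a)_{13}$ and checking it is a genuine two-sided inverse, which establishes that $T_\alpha$ is a $*$-isomorphism. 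Finally, equivariance amounts to checking
\[
(\id \otimes T_\alpha)(\Delta \boxtimes \alpha) = (\Delta \otimes \id)\,T_\alpha
\]
on generators, for which I would feed the explicit formula for $\Delta \boxtimes \alpha$ computed in the displayed calculation preceding the lemma into both sides and match terms.

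The main obstacle I expect is the well-definedness and multiplicativity of $T_\alpha$: one must be careful that the map "forget the first leg, keeping the braiding data" actually respects the product structure of the braided tensor product, since the product of two generators $\gamma(x)_{12}\alpha(a)_{13}$ and $\gamma(y)_{12}\alpha(b)_{13}$ involves moving $\alpha(a)_{13}$ past $\gamma(y)_{12}$, and this reordering is governed precisely by the Yetter--Drinfeld relation rather than by ordinary commutation. Making this rigorous likely requires either an explicit unitary implementation (so that $T_\alpha = \operatorname{ad}(U)$ for a suitable $U$ involving $\hat W$, after which homomorphism properties are automatic and only the range and boundary conditions need checking) or a careful leg-by-leg computation using the pentagon relation $W_{12}W_{13}W_{23} = W_{23}W_{12}$ and the relation between $W$ and $\hat W$. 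I would favour the implemented-unitary approach, as it simultaneously yields injectivity, the $*$-homomorphism property, and normality, reducing the lemma to the purely algebraic verifications of the image and the equivariance identity on the generating set.
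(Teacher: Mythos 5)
Your proposal is correct and follows essentially the same route as the paper: the paper defines $T_\alpha$ on the generators $\gamma(a)_{12}\alpha(b)_{13}$ as conjugation by $\hat W\otimes 1=\sigma W^*\sigma\otimes 1$ followed by a flip and $\id\otimes\alpha^{-1}$, computes that this sends the generator to $(a\otimes 1)\alpha(b)$, and concludes via the weak* density of the span of $(\DD\otimes 1)\alpha(N)$ in $\DD\vtp N$ --- precisely the implemented-unitary strategy you favour. The only minor difference is that the paper leaves the equivariance check implicit, whereas you propose to verify it explicitly on generators using the computed formula for $\Delta\boxtimes\alpha$.
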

\begin{proof}
It is sufficient to define $T_\alpha$ on
the set of generators $\{{\gamma(a)}_{12}{\alpha(b)}_{13}\}$. For
all $a\in\DD$ and $b\in N$
\[
T_\alpha({\gamma(a)}_{12}{\alpha(b)}_{13}):=
(\id\otimes\alpha^{-1})\Big((\sigma\otimes \id)\big(
(\sigma W^*\sigma\otimes 1)({\gamma(a)}_{12}{\alpha(b)}_{13})
(\sigma W\sigma\otimes 1)\big)\Big).
\]
Then the map $T_\alpha:\DD\btp N \to \DD\vtp N$ is well-defined.
Indeed, for $a\in\DD$ and $b\in N$ we have
\begin{align*}
(\sigma W^*\sigma\otimes 1)&({\gamma(a)}_{12}{\alpha(b)}_{13})
(\sigma W\sigma\otimes 1)\\
&=\,(\sigma W^*\sigma\otimes 1)\big((\hat{W}^*\otimes 1)
(1\otimes a\otimes 1)(\hat{W}\otimes 1)
{\alpha(b)}_{13}\big)(\sigma W\sigma\otimes 1)\\
&=\,(\hat{W}\otimes 1)\big((\hat{W}^*\otimes 1)
(1\otimes a\otimes 1)(\sigma W^*\sigma\otimes 1)
{\alpha(b)}_{13}\big)(\sigma W\sigma\otimes 1)\\
&=\,(1\otimes a\otimes 1)(\sigma W^*\sigma\otimes 1)
{\alpha(b)}_{13}(\sigma W\sigma\otimes 1)\\
&=\,(1\otimes a\otimes 1)(\sigma\otimes\id)
\big(W^*_{12}{\alpha(b)}_{23}W_{12}\big)\\
&=\,(\sigma\otimes\id)
\big((a\otimes 1\otimes 1)W^*_{12}{\alpha(b)}_{23}W_{12}\big)\\
&=\,(\sigma\otimes\id)\big((a\otimes 1\otimes 1)
(\Delta\otimes \id)(\alpha(b))\big).
\end{align*}
Therefore by definition of $T_\alpha$ we have
\[
	T_\alpha({\gamma(a)}_{12}{\alpha(b)}_{13})=(\id\otimes\alpha^{-1})
	\big((a\otimes 1\otimes 1)(\id\otimes \alpha)(\alpha(b))\big)
	=(a\otimes 1)\alpha(b).
\]
Since the linear span of $\{(\DD\otimes 1)\alpha(N)\}$ is weak* dense in
$\DD\vtp N$, $T_\alpha$ is a $*$-isomorphism from $\DD\btp N$ onto
$\DD\vtp N$ and it is clear that
for all $a\in N$ and $x\in\DD$,
$T_\alpha( 1\boxtimes a)=\alpha(a)$ and
$T_\alpha(x\boxtimes 1)=x\otimes 1$.
\end{proof}

\section{Amenable actions}\label{sect4}

In this section, we introduce the notion of 
amenable action of discrete quantum groups on
von Neumann algebras. This definition
is a generalization of the amenable action of 
discrete groups on von Neumann algebras
introduced in \cite[D\'efinition 3.4]{Del}. 
Recall that the homomorphism $\alpha:G\to {\text{Aut}}(M)$ is
called an {\it action} of a discrete group $G$ on 
a von Neumann algebra 
$M$. If $\tau$ denotes the left translation action of $G$
on $\ell^\infty(G)$, then the 
action $\alpha$ is called {\it amenable} if
there exists an equivariant conditional expectation 
$P: (\ell^\infty(G)\vtp M, \tau\otimes\alpha)\to({\bf 1}\vtp M, \alpha)$, 
i.e.,
\[
P(\tau_g\otimes\alpha_g)=\alpha_g\circ P,\,\,\,\,\,\,\,\,\, g\in G.
\]
There exists an automorphism 
$T_\alpha$ on $\ell^\infty(G)\vtp M$ defined by 
\[
T_\alpha\big(\sum_{g\in G}(\delta_g \otimes x_g)\big)=
\sum_{g\in G}(\delta_g \otimes \alpha_g^{-1}(x_g)).
\]
Since $\alpha(x)=\sum_{g\in G}(\delta_g \otimes \alpha_g(x))$ 
for all $x\in M$, we have 
$T_\alpha({\bf 1}\vtp M)=\alpha(M)$.
In some sense this means that the automorphism $T_\alpha$
make it possible to get away with the ``twisting'' effect
of $\alpha$. 
It is straightforward to check that 
\[
(\tau_g\otimes\id)\circ T_\alpha =
T_\alpha\circ(\tau_g\otimes\alpha_g),
\]
for all $g\in G$. 
So $T_\alpha$ is an equivariant isomorphism from
$(\ell^\infty(G)\vtp M, \tau\otimes\alpha)$ onto
$(\ell^\infty(G)\vtp M, \tau\otimes\id)$.

In summary, we have the following commutative diagram for 
the amenable action $\alpha$ of a discrete 
group $G$ on a von Neumann algebra $N$:
\begin{equation}\label{diagram}
\begin{tikzpicture}
  \matrix (m) [matrix of math nodes,row sep=4em,column sep=8em,minimum width=2em]
  {
    (\ell^\infty(G)\vtp M,\tau\otimes \alpha) & (\ell^\infty(G)\vtp M,\tau\otimes\id)\\
    ({\bf 1}\vtp M,\tau\otimes \alpha) & (\alpha(M),\tau\otimes\id)\\};
  \path[-stealth]
    (m-1-1) edge node [left] {$P$} (m-2-1)
            edge node [above] {$T_\alpha$} (m-1-2)
    (m-2-1.east|-m-2-2) edge node [below] {$T_\alpha$} 
            node [above] {} (m-2-2)
    (m-1-2) edge node [right] {$\overline{P}$} (m-2-2);
\end{tikzpicture}
\end{equation}

This diagram allows us to define an equivalent 
definition for the amenable action of
discrete groups on von Neumann algebras.
Let $\alpha:G\to {\text{Aut}}(M)$ be an action 
of a discrete group $G$ on a von Neumann 
algebra $M$
and let $\tau$ be the left translation 
action on $\ell^\infty(G)$.
Then the action  $\alpha$ is called amenable if 
there exists an equivariant conditional expectation
\[
P:(\ell^\infty(G)\vtp M,\tau\otimes\id)\to 
(\alpha(M),\tau\otimes\id).
\]
Motivated by this definition, 
we introduce the notion of the amenable 
action of discrete quantum 
groups on von Neumann algebras.

\begin{definition}\label{amenability}
	Let $\alpha : \G\curvearrowright N$ be an action 
    of a discrete quantum group $\G$ on a von Neumann 
    algebra $N$. Then $\alpha$ is called amenable 
    if there exists a conditional expectation
	$E_\alpha:\DD\vtp N\to \alpha(N)$ such that 
	\begin{equation}\label{G-equivariant}
	(\id \otimes E_\alpha)(\Delta\otimes\id)=
	(\Delta\otimes\id)E_\alpha.
	\end{equation}
\end{definition}

\begin{remark}
	The diagram (\ref{diagram}) shows that the 
	Definition \ref{amenability} coincides with 
	the classical definition of amenable actions 
	introduced in \cite{Del}.
\end{remark}

\begin{remark}\label{remark}
   The trivial action $\text{tr}:\G\curvearrowright\Bbb C$
   of a discrete quantum group $\G$ on the trivial space 
   is amenable if and only if 
   $\G$ is amenable. Indeed, if the trivial action $\text{{tr}}$
   is amenable, then there is an 
   equivariant conditional expectation 
   $E_{\text{tr}}:(\DD\vtp \Bbb C, \Delta\otimes \id)\to
   (\Bbb C\otimes 1,\Delta\otimes \id)$. 
   Define a state $m$ on $\DD$ by 
   $E_{\text{tr}}(x\otimes 1)=m(x)1\otimes 1$. Then
\begin{align*}
m(x  *  f)1\otimes 1 & =\,E_{\text{tr}}\big((x *  f)\otimes 1\big)\\
&=\,E_{\text{tr}}\big((f\otimes \id \otimes \id)
(\Delta\otimes \id)(x\otimes 1)\big)\\
&=\,(f\otimes \id \otimes \id)(\id\otimes E_{\text{tr}})
(\Delta\otimes \id)(x\otimes 1)\\
&=\,(f\otimes \id \otimes \id)(\Delta\otimes \id)
E_{\text{tr}}(x\otimes 1)\\
&=\,m(x)(f\otimes \id \otimes \id)(\Delta\otimes \id)(1\otimes 1)\\ 
&=\,m(x)f(1) 1\otimes 1.
\end{align*}
   Therefore $m$ is an invariant 
   mean on $\DD$. For the converse, if $\G$ is amenable,
   there exists an invariant state $m$ on $\DD$.
   Define the conditional expectation 
   $P:\DD\vtp \Bbb C\to\Bbb C\vtp\Bbb C$ by $P=m\otimes\id$.
   It is easy to check that $P$ is $(\Delta\otimes\id)$-equivariant.
   (See also Theorem \ref{relation}.)
\end{remark}

\begin{definition}
	Let $\alpha:\G\curvearrowright M$ and $\beta:\G\curvearrowright N$ 
	be actions of a discrete quantum group $\G$ on
	von Neumann algebras $M$ and $N$, respectively, 
	where $M$ is a von Neumann
	subalgebra of $N$. Then
	\begin{itemize}
		\item [1.] The triple $(N, \G, \beta)$ is an extension of
		           $(M, \G, \alpha)$ if $\alpha$ is the 
		           restriction of $\beta$ to $M$ and there 
		           is a conditional expectation
		           from $M$ onto $N$.
		\item [2.] For the extension $(N, \G, \beta)$
	               of $(M, \G, \alpha)$, the pair $(N, M)$ is 
	               called amenable, if there is an equivariant
	               conditional expectation $P$ from 
	               $(N, \beta)$ onto $(M, \alpha)$.
	\end{itemize}
\end{definition}

\begin{proposition}\label{extension}
   Let $(N, \G, \beta)$ be an extension of $(M, \G, \alpha)$.
	\begin{itemize}
		\item [1.] If the action $\alpha$ is amenable, then
		           the pair $(N, M)$ is amenable.
		\item [2.] If the action $\beta$ is amenable and the 
		           pair $(N, M)$ is amenable, then the action
		           $\alpha$ is amenable.
	\end{itemize}
\end{proposition}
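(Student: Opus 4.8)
The plan rests on reinterpreting amenability as an equivariance statement. For any action $\gamma:\G\curvearrowright A$, Definition \ref{amenability} says exactly that the conditional expectation $E_\gamma:\DD\vtp A\to\gamma(A)$ is equivariant for the action $\Delta\otimes\id$ on $\DD\vtp A$ and its restriction to the invariant subalgebra $\gamma(A)$ (invariance of $\gamma(A)$ is the action identity $(\Delta\otimes\id)\gamma=(\id\otimes\gamma)\gamma$). Once amenability is phrased this way, both assertions follow by composing equivariant maps — a composition of equivariant maps being again equivariant — and then invoking Tomiyama's theorem to upgrade an idempotent normal unital completely positive (u.c.p.) map onto a subalgebra to a conditional expectation. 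The three equivariant building blocks I will use are: the action $\gamma$ itself, which is an equivariant isomorphism $(A,\gamma)\to(\gamma(A),\Delta\otimes\id)$; its inverse $\gamma^{-1}$; and, for a normal u.c.p. map $\Phi:N\to M$, the map $(\id\otimes\Phi)\beta$, which I claim is equivariant from $(N,\beta)$ to $(\DD\vtp M,\Delta\otimes\id)$.

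For part 1 I would write $\Phi:N\to M$ for the conditional expectation supplied by the extension structure and $E_\alpha:\DD\vtp M\to\alpha(M)$ for the one witnessing amenability of $\alpha$, and set
\[
P:=\alpha^{-1}\circ E_\alpha\circ(\id\otimes\Phi)\circ\beta:N\to M.
\]
Read from the inside out, $P$ is the composition of three equivariant maps: $(\id\otimes\Phi)\beta$ sends $(N,\beta)$ equivariantly into $(\DD\vtp M,\Delta\otimes\id)$, then $E_\alpha$ maps this onto $(\alpha(M),\Delta\otimes\id)$, and finally $\alpha^{-1}$ sends $(\alpha(M),\Delta\otimes\id)$ to $(M,\alpha)$. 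Hence $(\id\otimes P)\beta=\alpha\circ P$. To see $P$ is a conditional expectation onto $M$, take $x\in M$: then $\beta(x)=\alpha(x)$ since $\alpha$ is the restriction of $\beta$, $(\id\otimes\Phi)\alpha(x)=\alpha(x)$ because $\Phi|_M=\id$ and $\alpha(x)\in\DD\vtp M$, $E_\alpha(\alpha(x))=\alpha(x)$, and $\alpha^{-1}(\alpha(x))=x$, so $P|_M=\id$. Being a normal idempotent u.c.p. map onto $M$, $P$ is a conditional expectation, and the pair $(N,M)$ is amenable.

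For part 2 I would let $E_\beta:\DD\vtp N\to\beta(N)$ witness amenability of $\beta$, let $P:N\to M$ be the equivariant conditional expectation witnessing amenability of $(N,M)$, and write $\iota:M\hookrightarrow N$ for the inclusion, and set
\[
E_\alpha:=\alpha\circ P\circ\beta^{-1}\circ E_\beta\circ(\id\otimes\iota):\DD\vtp M\to\alpha(M).
\]
Again this is a composition of equivariant maps: $\id\otimes\iota$ maps $(\DD\vtp M,\Delta\otimes\id)$ into $(\DD\vtp N,\Delta\otimes\id)$, $E_\beta$ maps onto $(\beta(N),\Delta\otimes\id)$, $\beta^{-1}$ goes to $(N,\beta)$, $P$ goes to $(M,\alpha)$, and $\alpha$ returns to $(\alpha(M),\Delta\otimes\id)$. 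Consequently $E_\alpha$ satisfies the equivariance identity $(\id\otimes E_\alpha)(\Delta\otimes\id)=(\Delta\otimes\id)E_\alpha$. For $x\in M$ one has $(\id\otimes\iota)\alpha(x)=\beta(x)$, $E_\beta(\beta(x))=\beta(x)$, $\beta^{-1}(\beta(x))=x$, $P(x)=x$, and $\alpha(x)=\alpha(x)$, so $E_\alpha$ restricts to the identity on $\alpha(M)$; as a normal idempotent u.c.p. map onto $\alpha(M)$ it is the conditional expectation required by Definition \ref{amenability}, and $\alpha$ is amenable.

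The only steps beyond formal bookkeeping are the equivariance of the three building blocks, and this is where I would concentrate the work. The equivariance of $\alpha$ (respectively $\beta^{-1}$ and $\alpha^{-1}$) is read off directly from the action identity $(\Delta\otimes\id)\alpha=(\id\otimes\alpha)\alpha$, while the equivariance of $(\id\otimes\Phi)\beta$ follows from $(\id\otimes\beta)\beta=(\Delta\otimes\id)\beta$ together with the fact that $\Delta$ acts on the first leg and $\Phi$ on the last, so the two operations commute; each I would verify by a one-line leg-numbering computation. The remaining point needing care is that every composite is genuinely a conditional expectation and not merely a u.c.p. projection, which I would settle by Tomiyama's theorem once I note that all the constituent maps are normal and unital.
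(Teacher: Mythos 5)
Your proof is correct and follows essentially the same route as the paper: in part 1 your map $\alpha^{-1}\circ E_\alpha\circ(\id\otimes\Phi)\circ\beta$ is exactly the paper's conditional expectation, and in part 2 your composite $\alpha\circ P\circ\beta^{-1}\circ E_\beta\circ(\id\otimes\iota)$ agrees with the paper's $(\id\otimes P)\circ E_\beta$ restricted to $\DD\vtp M$, since $(\id\otimes P)\beta=\alpha\circ P$. The only difference is presentational --- you package the verification as a composition of equivariant maps where the paper writes out the corresponding chain of equalities directly.
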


\begin{proof}
	(1): Assume that $(N, \G, \beta)$ is an extension of $(M, \G, \alpha)$
	and therefore there is a conditional expectation $Q$ from 
	$N$ onto $M$. Since $\alpha$
	is amenable we have an equivariant conditional expectation $E_\alpha$ 
	from $(\DD\vtp M, \Delta\otimes\id)$ onto 
	$(\alpha(M), \Delta\otimes\id)$.
	Define the conditional expectation $P:N\to M$ by
	\[
	P = \alpha^{-1}\circ E_\alpha \circ (\id\otimes Q)\circ \beta.
	\]
	Then we have
	\begin{align*}
(\id\otimes P)\beta & =\, 
(\id \otimes \alpha^{-1}\circ E_\alpha)(\id \otimes Q)
(\id \otimes \beta)\beta\\
&=\,(\id \otimes \alpha^{-1})(\id\otimes E_\alpha)
(\id \otimes Q)(\Delta\otimes \id)\beta\\
&=\,(\id \otimes \alpha^{-1})(\id\otimes E_\alpha)
(\Delta\otimes \id)(\id \otimes Q)\beta\\
&=\,(\id \otimes \alpha^{-1})(\Delta\otimes \id)E_\alpha(\id \otimes Q)\beta\\
&=\,(\id \otimes \alpha^{-1})(\id \otimes\alpha)E_\alpha(\id \otimes Q)\beta\\
&=\,\alpha \circ P.
   \end{align*}
It shows that $P:(N,\beta)\to (M, \alpha)$ is equivariant.
\newline
	(2): Now suppose that $\beta$ is amenable, then 
	there is an equivariant conditional expectation $E_\beta$
	from $(\DD\vtp N, \Delta\otimes\id)$ onto 
	$(\beta(N), \Delta\otimes\id)$. Since 
	the pair $(N, M)$ is amenable,
	there is also a conditional expectation $P$ from $N$ onto
	$M$ such that $(\id\otimes P)\beta=\alpha\circ P$. Hence
	the composition $(\id\otimes P)E:\DD\vtp N\to \alpha(M)$ is
	a conditional expectation such that
	\begin{align*}
	(\id\otimes\id\otimes P)(\id\otimes E)(\Delta\otimes\id) &=\,
	(\id\otimes\id\otimes P)(\Delta\otimes\id)E\\
	&=\,(\Delta\otimes\id)(\id\otimes P)E.
	\end{align*}
	Since $\alpha(M)\subseteq \DD\vtp M$,
	by restricting of $(\id\otimes P)E$ to 
	$\DD\vtp M$ we obtain an
	equivariant conditional expectation from
	$(\DD\vtp M, \Delta\otimes\id)$ onto $(\alpha(M), \Delta\otimes\id)$,
	and therefore the action $\alpha$ is amenable.
\end{proof}
    Let $\alpha:\G\curvearrowright N$ be an action of a 
	discrete quantum group $\G$ on a
	von Neumann algebra $N$. Consider the conditional expectation $E_0$ as 
    the equation (\ref{covariant})
    and fix an arbitrary state $f\in\DO$. Then 
    $E_0\otimes f \otimes \id$ is 
    a conditional expectation from 
    $B(\DT)\vtp\DD\vtp N$ onto 
    $B(\DT)\vtp {\bf 1}\vtp N$. By
    restricting we obtain a conditional 
    expectation from $\DD\btp N$ onto
    ${\bf 1}\btp N$. So the triple
    $(\DD\btp N, \Delta\boxtimes \alpha, \G)$ is an extension of
	$({\bf 1}\btp N, \Delta\boxtimes \alpha, \G)$.
\begin{proposition}\label{amenable pair}
	Let $\alpha:\G\curvearrowright N$ be an action of a 
	discrete quantum group $\G$ on a
	von Neumann algebra $N$. Then the action $\alpha$
	is amenable if and only if for the extension
	$(\DD\btp N, \Delta\boxtimes \alpha, \G)$
	of $({\bf 1}\btp N, \Delta\boxtimes \alpha, \G)$, the pair
	$(\DD\btp N, {\bf 1}\btp N)$ is amenable.
\end{proposition}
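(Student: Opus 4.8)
The plan is to transport the two amenability conditions into one another through the equivariant $*$-isomorphism $T_\alpha$ of Lemma \ref{braided product}. Recall that $T_\alpha:(\DD\btp N,\Delta\boxtimes\alpha)\to(\DD\vtp N,\Delta\otimes\id)$ satisfies $T_\alpha(1\boxtimes a)=\alpha(a)$ and $T_\alpha(x\boxtimes 1)=x\otimes 1$; in particular its restriction carries $1\btp N$ isomorphically onto $\alpha(N)$, and as an equivariant map it obeys $(\id\otimes T_\alpha)(\Delta\boxtimes\alpha)=(\Delta\otimes\id)T_\alpha$. Applying $\id\otimes T_\alpha^{-1}$ on the left and $T_\alpha^{-1}$ on the right of this identity yields the companion relation $(\id\otimes T_\alpha^{-1})(\Delta\otimes\id)=(\Delta\boxtimes\alpha)T_\alpha^{-1}$, i.e.\ $T_\alpha^{-1}$ is equivariant as well. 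These two intertwining relations are the only structural facts the argument will use.

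For the forward direction, suppose $\alpha$ is amenable, so that by Definition \ref{amenability} there is a conditional expectation $E_\alpha:\DD\vtp N\to\alpha(N)$ with $(\id\otimes E_\alpha)(\Delta\otimes\id)=(\Delta\otimes\id)E_\alpha$. I would set $P:=T_\alpha^{-1}\circ E_\alpha\circ T_\alpha$. Since $T_\alpha$ is a $*$-isomorphism of $\DD\btp N$ onto $\DD\vtp N$ taking $1\btp N$ onto $\alpha(N)$, the map $P$ is a conditional expectation of $\DD\btp N$ onto $1\btp N$. Its equivariance is then a short substitution of the relations above:
\[
(\id\otimes P)(\Delta\boxtimes\alpha)
=(\id\otimes T_\alpha^{-1})(\id\otimes E_\alpha)(\Delta\otimes\id)T_\alpha
=(\id\otimes T_\alpha^{-1})(\Delta\otimes\id)E_\alpha T_\alpha
=(\Delta\boxtimes\alpha)P,
\]
so $P$ is an equivariant conditional expectation from $(\DD\btp N,\Delta\boxtimes\alpha)$ onto $(1\btp N,\Delta\boxtimes\alpha)$, which is exactly amenability of the pair $(\DD\btp N,1\btp N)$.

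The converse runs through the same conjugation in the opposite direction: given an equivariant conditional expectation $P:(\DD\btp N,\Delta\boxtimes\alpha)\to(1\btp N,\Delta\boxtimes\alpha)$, I would set $E_\alpha:=T_\alpha\circ P\circ T_\alpha^{-1}$, a conditional expectation of $\DD\vtp N$ onto $\alpha(N)$, and verify $(\id\otimes E_\alpha)(\Delta\otimes\id)=(\Delta\otimes\id)E_\alpha$ by the same insertion of the two intertwining relations, recovering amenability of $\alpha$. Note that the paragraph preceding the statement already supplies the conditional expectation of $\DD\btp N$ onto $1\btp N$ that makes $(\DD\btp N,\Delta\boxtimes\alpha,\G)$ a genuine extension of $(1\btp N,\Delta\boxtimes\alpha,\G)$, so no separate existence question arises. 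I do not expect a real obstacle here: the entire content is packaged in Lemma \ref{braided product}, and the one point deserving a word of care is that conjugation by $T_\alpha$ preserves the conditional-expectation property (complete positivity, the bimodule property over the target, and unitality), which is automatic because $T_\alpha$ is a normal $*$-isomorphism and the two target subalgebras $1\btp N$ and $\alpha(N)$ correspond under it.
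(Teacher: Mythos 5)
Your proposal is correct and follows exactly the route the paper takes: the paper's proof consists precisely of invoking the equivariant $*$-isomorphism $T_\alpha$ of Lemma \ref{braided product} (which carries ${\bf 1}\btp N$ onto $\alpha(N)$) to transport the equivariant conditional expectation in both directions. You have merely written out the conjugation $P=T_\alpha^{-1}\circ E_\alpha\circ T_\alpha$ and the intertwining computation that the paper leaves implicit.
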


\begin{proof}
    By Lemma \ref{braided product}, there exists an
    equivariant $*$-isomorphism between 
    $(\DD\btp N , \Delta\boxtimes \alpha)$ and
    $(\DD\vtp N , \Delta\otimes \id)$. Since
    $(\DD\btp N, \Delta\boxtimes \alpha, \G)$ is an extension of
	$({\bf 1}\btp N, \Delta\boxtimes \alpha, \G)$, the action $\alpha$
	is amenable if and only if the pair
	$(\DD\btp N, {\bf 1}\btp N)$ is amenable.
\end{proof}

The following result is a noncommutative version of 
\cite[Proposition 3.6]{Del}.
\begin{theorem}\label{relation}
	Let $\alpha:\G\curvearrowright N$ be an 
	action of a 
	discrete quantum group $\G$ on a von Neumann algebra
	$N$. The following are equivalent:
	\begin{itemize}
	\item[1.] The quantum group $\G$ is amenable.
	\item[2.] The action $\alpha$ is amenable
	          and there exists an invariant state on $N$.
	\end{itemize}
\end{theorem}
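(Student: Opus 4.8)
The plan is to build the required objects by hand in each direction: an invariant mean on $\DD$ from the data in (2), and both an invariant state and the equivariant conditional expectation from an invariant mean in (1). The two implications are quite asymmetric in difficulty, so I separate them.

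\smallskip
\noindent\emph{Proof of (2)$\Rightarrow$(1).} Here I would produce the invariant mean directly. Given the equivariant conditional expectation $E_\alpha:\DD\vtp N\to\alpha(N)$ and an invariant state $\omega$ on $N$, set $g=\alpha^{-1}\circ E_\alpha(\,\cdot\,\otimes 1):\DD\to N$ and define $m=\omega\circ g$, i.e. $m(x)=\omega(\alpha^{-1}(E_\alpha(x\otimes 1)))$. Since $E_\alpha$ is unital and $\alpha^{-1}\alpha=\id$, this $m$ is a state. For invariance, I would start from $(\Delta\otimes\id)(x\otimes 1)=\Delta(x)\otimes 1$, apply the equivariance $(\id\otimes E_\alpha)(\Delta\otimes\id)=(\Delta\otimes\id)E_\alpha$ together with the action identity $(\Delta\otimes\id)\alpha=(\id\otimes\alpha)\alpha$, slice the outer $\DD$-leg by $f\in\DO$, and use injectivity of $\alpha$ to obtain $g(x*f)=(f\otimes\id)\alpha(g(x))$. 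Applying $\omega$ and invoking invariance of $\omega$ in the form $(f\otimes\omega)\alpha=\langle f,1\rangle\omega$ then gives $m(x*f)=\langle f,1\rangle\, m(x)$, so $m$ is an invariant mean and $\G$ is amenable. This part is essentially complete as written.

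\smallskip
\noindent\emph{Proof of (1)$\Rightarrow$(2): the invariant state.} Amenability of $\G$ provides an invariant mean $m$ on $\DD$. I would fix any normal state $\omega_0$ on $N$ and put $\omega=(m\otimes\omega_0)\circ\alpha$; this is well defined because $(\id\otimes\omega_0)\alpha:N\to\DD$ is normal and $m$ is applied afterwards, and it is clearly a state. To check invariance, expand $(f\otimes\omega)\alpha(x)=\omega((f\otimes\id)\alpha(x))$ using $(\id\otimes\alpha)\alpha=(\Delta\otimes\id)\alpha$, rewrite it as $(f\otimes m\otimes\omega_0)(\Delta\otimes\id)\alpha(x)$, and apply the defining invariance of $m$, namely $(f\otimes m)\Delta=\langle f,1\rangle\, m$, to land on $\langle f,1\rangle\,\omega(x)$. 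Thus $\omega$ is invariant; this part is also complete.

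\smallskip
\noindent\emph{Proof of (1)$\Rightarrow$(2): the equivariant conditional expectation.} It remains to manufacture, still under amenability of $\G$, the conditional expectation witnessing amenability of an arbitrary $\alpha$. I would pass to the braided picture: by Proposition \ref{amenable pair} and Lemma \ref{braided product} it suffices to construct an equivariant conditional expectation $Q:(\DD\btp N,\Delta\boxtimes\alpha)\to(\mathbf 1\btp N,\Delta\boxtimes\alpha)$, since transporting $Q$ through the equivariant isomorphism $T_\alpha$ of Lemma \ref{braided product} yields $E_\alpha:(\DD\vtp N,\Delta\otimes\id)\to(\alpha(N),\Delta\otimes\id)$. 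This is the exact analogue of the classical diagonal picture $(\ell^\infty(G)\vtp M,\tau\otimes\alpha)$, in which the invariant mean produces the equivariant conditional expectation as the mere slice $m\otimes\id$, its equivariance being a consequence of left invariance of $m$ and the triviality of $\tau$ on constants. Accordingly I would realize $Q$ by averaging the first ($\DD$) leg of $\DD\btp N$ against $m$: starting from the unconditional conditional expectation $\DD\btp N\to\mathbf 1\btp N$ obtained before Proposition \ref{amenable pair} by restricting $E_0\otimes f\otimes\id$, I would average it over $\Delta\boxtimes\alpha$ using $m$ so as to enforce equivariance.

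\smallskip
The hard part is precisely this construction of $Q$. Making the ``slice by the invariant mean'' rigorous on the braided product $\DD\btp N\subseteq B(\DT)\vtp\DD\vtp N$ forces one to apply the non-normal functional $m$ to the atomic algebra $\DD$ and to carry the braiding/Yetter--Drinfeld relation (\ref{YD}) through the leg bookkeeping; one must then check that the averaged map is unital of norm one, hence a genuine conditional expectation by Tomiyama's theorem, and that its equivariance $(\id\otimes Q)(\Delta\boxtimes\alpha)=(\Delta\boxtimes\alpha)Q$ indeed reduces to the invariance of $m$. Once $Q$ is secured, the equivariant isomorphism $T_\alpha$ delivers $E_\alpha$ and the action $\alpha$ is amenable, completing (1)$\Rightarrow$(2).
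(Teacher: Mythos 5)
Your proof of (2)$\Rightarrow$(1) and your construction of the invariant state in (1)$\Rightarrow$(2) are correct and are literally the paper's formulas ($m=\omega\circ\alpha^{-1}\circ E_\alpha(\,\cdot\,\otimes 1)$ and $\omega=(m\otimes\eta)\alpha$), with sound verification sketches. The gap is in the remaining half of (1)$\Rightarrow$(2). Producing the equivariant conditional expectation from the invariant mean is the substance of the theorem; you correctly reduce it via Proposition \ref{amenable pair} to finding a $(\Delta\boxtimes\alpha)$-equivariant conditional expectation $Q:\DD\btp N\to{\bf 1}\btp N$, and then you stop --- you yourself label the construction of $Q$ ``the hard part'' and list its required properties as things ``one must check''. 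The paper's $Q$ is the restriction of $E_0\otimes m\otimes\id$ to $\DD\btp N\subseteq B(\DT)\vtp\DD\vtp N$ (so $m$ sits on the middle $\DD$-leg), not an average of a non-equivariant expectation over the action, and its equivariance is not a formal consequence of invariance of $m$: it rests on two specific computations missing from your outline, namely the identity
\[
(\Delta\boxtimes\alpha)(x)=(\mathrm{ad}(W^*)\otimes\id\otimes\id)(\sigma\otimes\id\otimes\id)(\id\otimes\Delta\otimes\id)(x)
\]
(equation (\ref{new equality}), proved on generators via the Yetter--Drinfeld relation (\ref{YD})), which is what places a copy of $\Delta$ on the leg that $m$ sees so that left invariance $(\id\otimes m)\Delta=m(\cdot)1$ can collapse it, and the identity $(\id\otimes E_0)\Delta^\ell(\hat x)=\hat\fee(\hat x)1\otimes 1$ (equation (\ref{E_0})), which absorbs the residual conjugation by $W_{12}$ produced by the braiding.

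Your alternative recipe --- take $(E_0\otimes f\otimes\id)|_{\DD\btp N}$ and ``average it over $\Delta\boxtimes\alpha$ using $m$'' --- is not carried out and is not obviously equivalent to the direct slice: you would still have to say on which leg $m$ acts, check that the averaged map is well defined on the weak*-closed span of the generators $\gamma(a)_{12}\alpha(b)_{13}$, that it takes values in ${\bf 1}\btp N$ and fixes it (so that Tomiyama applies), and that the equivariance computation closes given that the available invariance of $m$ is left invariance and the flip $\sigma$ from (\ref{YD}) intervenes in the leg bookkeeping. Since none of this is done, the implication (1)$\Rightarrow$(2) is incomplete precisely where the paper does its real work.
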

\begin{proof}
    (2) $\Rightarrow$ (1): suppose that $\omega$ 
	is an invariant state 
	on $N$ and $E_\alpha$ is an equivariant 
	conditional expectation from $\DD\vtp N$ onto $\alpha(N)$
	coming from amenability of the action $\alpha$.
	Define a state $m$ on $\DD$ by
	\[
	\la m , x \ra = \la \omega , \alpha^{-1}\circ E_\alpha(x\otimes 1)\ra.
	\]
	Then $m$ is a left invariant state on $\DD$. Indeed, for any 
	$x\in \DD$ and
	$f\in\DO$ we have 
	 \begin{align*}
	 \la m , x *  f \ra &=\,\la m , (f\otimes\id)\Delta(x) \ra\\
&=\,\la \omega , \alpha^{-1}\circ 
E_\alpha\big(((f\otimes\id)\Delta(x))\otimes 1\big) \ra\\
&=\,\la \omega , \alpha^{-1}\big((f\otimes\id\otimes\id)(\id\otimes E_\alpha)
(\Delta\otimes \id)(x \otimes 1)\big) \ra\\
&=\,\la \omega , \alpha^{-1}\big((f\otimes\id\otimes\id)(\Delta\otimes \id)
E_\alpha(x \otimes 1)\big) \ra\\
&=\,\la \omega , \alpha^{-1}\big((f\otimes\id\otimes\id)(\id\otimes \alpha)
E_\alpha(x \otimes 1)\big) \ra\\
&=\,\la \omega , (f\otimes\id)E_\alpha(x \otimes 1) \ra\\
&=\,f(1)\la \omega , \alpha^{-1}\circ E_\alpha(x \otimes 1) \ra\\
&=\,f(1)\la m , x \ra,
\end{align*} 
	where we use the fact that $\omega$ is invariant 
	in the penultimate step.

	\noindent
	(1) $\Rightarrow $ (2): suppose that $m$ is a 
	left invariant mean on $\DD$.
	Fix $\eta\in N_*$ and define $\omega:=(m\otimes\eta)\alpha$. 
	Then for any
	$f\in \DO$ and $x\in N$, we have
	 \begin{align*}
	 \la f\otimes\omega , \alpha(x) \ra & =\,\la \omega , (f\otimes\id)\alpha(x) \ra\\
&\,=\la m\otimes\eta , (f\otimes\id\otimes\id)(\id\otimes \alpha)\alpha(x) \ra\\
&\,=\la m\otimes\eta , (f\otimes\id\otimes\id)(\Delta\otimes \id)\alpha(x) \ra\\
&\,=\la m , (f\otimes\id)\Delta\big((\id\otimes\eta)\alpha(x)\big) \ra\\
&\,=f(1)\la m , (\id\otimes\eta)\alpha(x) \ra\\
&\,=f(1)\la m\otimes\eta , \alpha(x) \ra\\
&\,=f(1)\la \omega , x \ra,
	 \end{align*}
	it shows $\omega$ is an invariant state on $N$.
	\newline
	Now we prove that the action $\alpha$ is amenable.
	First, we claim that for any $x\in\DD\btp N$, we have
	    \begin{equation}\label{new equality}
    	(\Delta\boxtimes \alpha)(x)=(\text{ad}(W^*)\otimes\id\otimes\id)(\sigma\otimes\id\otimes\id)(\id\otimes\Delta\otimes\id)(x).
    \end{equation}
	Since the linear span of $\{\gamma(a_i)_{12}\alpha(b_i)_{13}\}$ is 
	weak* dense in $\DD\btp N$, we need to show
	(\ref{new equality}) for the set of generators. For any
	$a\in \DD$ and $b\in N$ we have
\begin{align*}
    (\Delta\boxtimes \alpha)\big(&{\gamma (a)}_{12}{\alpha(b)}_{13}\big)	
    \,=\,\big((\id\otimes\gamma)\Delta(a)\big)_{123}\big((\id\otimes\alpha)\alpha(b)\big)_{124}\\
    &=\,\big((\text{ad}(W^*)\otimes\id)(\sigma\otimes\id)(\id\otimes\Delta)\gamma(a)\big)_{123} \big((\id\otimes\alpha)\alpha(b)\big)_{124}\\
    &=\,\big(W^*_{12}(\sigma\otimes\id)\big((\id\otimes\Delta)\gamma(a)\big) W_{12}\big)_{123} \big((\id\otimes\alpha)\alpha(b)\big)_{124}\\    
    &=\,\big(W^*_{12}(\sigma\otimes\id)\big((\id\otimes\Delta)\gamma(a)\big)W_{12}\big)_{123} \big((\Delta\otimes\id)\alpha(b)\big)_{124}\\    
    &=\,\big(W^*_{12}(\sigma\otimes\id)\big((\id\otimes\Delta)\gamma(a)\big) W_{12}\big)_{123}
        W^*_{12}\alpha(b)_{24}W_{12}\\    
    &=\,W^*_{12}\big((\sigma\otimes\id)(\id\otimes\Delta)\gamma(a)\big)_{123}\alpha(b)_{24}W_{12}\\
    &=\,(\text{ad}(W^*)\otimes\id\otimes\id)(\sigma\otimes\id\otimes\id)\big[\big((\id\otimes\Delta)\gamma(a)
        \big)_{123}\,\alpha(b)_{14}\big]\\
    &=\,(\text{ad}(W^*)\otimes\id\otimes\id)(\sigma\otimes\id\otimes\id)(\id\otimes\Delta\otimes\id)(\gamma(a)_{12}\,\alpha(b)_{13}).
    \end{align*}
    Hence we conclude the equality (\ref{new equality}).
	Let $E_0:B(\DT)\to \DD$ be the normal conditional 
	expectation given by (\ref{covariant}).
	Therefore for any $\hat{x}\in\LLL$ and $f,\omega\in \DO$, we have
	 \begin{align*}
\la \omega\otimes f , (\id\otimes E_0)\Delta^\ell(\hat{x}) \rangle &=\,
\la f , E_0\big((\omega\otimes\id)\Delta^\ell(\hat{x})\big) \rangle \\
&=\,\la f ,(\omega\otimes\id)\Delta^\ell(E_0(\hat{x}))\rangle \\
&=\,\la \omega\otimes f , \hat{\varphi}(\hat{x}) 1\otimes 1 \rangle.
	 \end{align*}
Hence 
\begin{equation}\label{E_0}
(\id\otimes E_0)\Delta^\ell(\hat{x}) = \hat{\varphi}(\hat{x}) 1\otimes 1,
\end{equation} 
    for all $\hat{x}\in\LLL$. Consider the conditional expectation
	$E_0\otimes m\otimes\id$ from the von Neumann algebra
	$B(\DT)\vtp\DD\vtp N$ onto $\DD\vtp{\bf 1}\vtp N$.
	Then by restricting, there is a conditional expectation
	$E$ from $\DD\btp N$ onto ${\bf 1}\btp N$.
	We show that the conditional expectation $E$ is
	$(\Delta\boxtimes\alpha)$-equivariant. For any 
	$a\in\DD$ and $b\in N$, by the equality (\ref{E_0}) we have
	\begin{align*}
		(\id\otimes E_0\otimes \id &\otimes\id)
		\Big[W^*_{12}{\gamma(a)}_{23}\,{\alpha(b)}_{24}
        W_{12}\Big] \\
    &=\,(\id\otimes E_0\otimes \id\otimes\id)
        \Big[W^*_{12}{\gamma(a)}_{23}W_{12}W^*_{12}{\alpha(b)}_{24}
        W_{12}\Big]\\
    &=\,(\id\otimes E_0\otimes\id\otimes\id)\Big[
        \big((\Delta^\ell\otimes\id)\gamma(a)\big)_{123}
        \big((\id\otimes\alpha)\alpha(b)\big)_{124}\Big]\\  
    &=\,\big((\id\otimes E_0\otimes \id)
        (\Delta^\ell\otimes\id)\gamma(a)\big)_{123}
        \big((\id\otimes\alpha)\alpha(b)\big)_{124}\\  
    &=\,(\id\otimes \hat\fee\otimes \id\otimes \id)\big(\gamma(a)_{23}\big)
        \big((\id\otimes\alpha)\alpha(b)\big)_{124}. 
	\end{align*}
	Consider $x\in \DD\btp N$ as 
    $\displaystyle\sum_{i\in I}{{\gamma(a_i)}_{12}{\alpha(b_i)}_{13}}$.
    Since $m$ is an invariant mean, 
    the equality (\ref{new equality}) and the above 
    calculation yield that
    \begin{align*}
   	&(\id\otimes E)(\Delta\boxtimes \alpha)(x)\\
   	&=\,(\id\otimes E)\Big[(\text{ad}(W^*)\otimes\id\otimes\id)(\sigma\otimes\id\otimes\id)(\id\otimes\Delta\otimes\id)(x)\Big]\\
    &=\,(\id\otimes E)\Big[W^*_{12}\big((\sigma\otimes\id\otimes\id)(\id\otimes\Delta\otimes\id)(x)\big)W_{12}\Big]\\
    &=\,(\id\otimes E_0\otimes m\otimes\id)\Big[W^*_{12}\big((\sigma\otimes\id\otimes\id)(\id\otimes\Delta\otimes\id)(x)\big)W_{12}\Big]\\
    &=\,(\id\otimes E_0\otimes \id)\Big[W^*_{12}\big((\sigma\otimes\id)(\id\otimes\id\otimes m \otimes\id)(\id\otimes\Delta\otimes\id)(x)\big)W_{12}\Big]\\
    &=\,(\id\otimes E_0\otimes \id)\Big[W^*_{12}\big((\sigma\otimes\id)(\id\otimes\id\otimes m \otimes\id)(x_{134})\big)W_{12}\Big]\\
    &=\,(\id\otimes E_0\otimes m\otimes\id)
        \Big[W^*_{12}(1\otimes x)W_{12}\Big]\\  
    &=\,(\id\otimes E_0\otimes m\otimes\id)\Big[W^*_{12}
        \displaystyle\sum_{i\in I}{{\gamma(a_i)}_{23}{\alpha(b_i)}_{24}}\,
        W_{12}\Big]\\
    &=\,(\id\otimes \id\otimes m\otimes\id)\Big[\displaystyle\sum_{i\in I}
        (\id\otimes \hat\fee\otimes \id\otimes \id)\big(\gamma(a_i)_{23}\big)
        \big((\id\otimes\alpha)\alpha(b_i)\big)_{124}\Big],                                      \end{align*}
    where we use the normality of the conditional expectation $E_0$
    in the last equality.
    On the other hand, repeating the calculation show that
    \begin{align*}
   	(\Delta\boxtimes &\alpha)E(x)=
   	(\Delta\boxtimes \alpha)(E_0\otimes m\otimes\id)(x)\\
   	&=\,(\Delta\boxtimes \alpha)(E_0\otimes m\otimes\id)
   	\big[\displaystyle\sum_{i\in I}
   	{{\gamma(a_i)}_{12}\,{\alpha(b_i)}_{13}}\big]\\
   	&=\,(\Delta\boxtimes \alpha)(\id\otimes m\otimes\id)
   	\Big[\displaystyle\sum_{i\in I}(E_0\otimes \id\otimes\id)\big[
   	{{\gamma(a_i)}_{12}\,{\alpha(b_i)}_{13}}\big]\Big]\\
   	&=\,(\Delta\boxtimes \alpha)(\id\otimes m\otimes\id)
   	\Big[\displaystyle\sum_{i\in I}(E_0\otimes \id\otimes\id)\big[
   	{{\gamma(a_i)}_{12}\big]{\alpha(b_i)}_{13}}\Big]\\
   	&=\,(\Delta\boxtimes \alpha)(\id\otimes m\otimes\id)
   	\Big[\displaystyle\sum_{i\in I}(\hat\fee\otimes \id\otimes\id)\big[
   	{{\gamma(a_i)}_{12}\big]\,{\alpha(b_i)}_{13}}\Big]\\
   	&=\,(\id\otimes\id\otimes m\otimes\id)
   	\Big[\displaystyle\sum_{i\in I}(\id\otimes\hat\fee\otimes \id\otimes\id)
   	\big({\gamma(a_i)}_{23}\big)(\id\otimes\alpha)\alpha(b_i)\big)_{124}\Big]
   	\end{align*}
   	From these two calculations, it follows that 
   	the pair $(\DD\btp N,{\bf 1}\btp N)$ is amenable and 
   	by Proposition \ref{amenable pair} the action $\alpha$ is amenable.
\end{proof}
\begin{theorem}\label{isomorphism}
	Let $\alpha:\G\curvearrowright N$ be an action of 
	a discrete quantum group $\G$ on a von Neumann algebra
	$N$. Then there is an equivariant 
	isomorphism $\Phi$ from 
	$\big((\DD\btp N)\ltimes_{\Delta\boxtimes\alpha} \G,
	\widehat{\Delta\boxtimes\alpha} \big)$ onto 
	$\big(B(\DT)\vtp N, \widehat{\Delta\boxtimes\alpha} \big)$
	 such that $\Phi$ maps
	$({\bf 1}\btp N)\ltimes_{\Delta\boxtimes \alpha} \G$
	onto $N\ltimes_\alpha \G$.
\end{theorem}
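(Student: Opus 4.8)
The plan is to realize $\Phi$ as a composite of two isomorphisms: one untwisting the braided action via Lemma \ref{braided product}, and one collapsing the resulting translation crossed product onto $B(\DT)$. First I would invoke Lemma \ref{braided product}, which supplies an equivariant $*$-isomorphism $T_\alpha:(\DD\btp N,\Delta\boxtimes\alpha)\to(\DD\vtp N,\Delta\otimes\id)$ satisfying $(\id\otimes T_\alpha)(\Delta\boxtimes\alpha)=(\Delta\otimes\id)T_\alpha$, with $T_\alpha({\bf 1}\boxtimes a)=\alpha(a)$ and $T_\alpha(x\boxtimes{\bf 1})=x\otimes{\bf 1}$. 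Crossed products are functorial for equivariant isomorphisms: the normal $*$-isomorphism $\id_{B(\DT)}\otimes T_\alpha$ sends each generator $(\Delta\boxtimes\alpha)(z)$ to $(\Delta\otimes\id)(T_\alpha z)$ and preserves $\LLL\vtp{\bf 1}$, so it carries $(\DD\btp N)\ltimes_{\Delta\boxtimes\alpha}\G$ onto $(\DD\vtp N)\ltimes_{\Delta\otimes\id}\G$ and, since $T_\alpha({\bf 1}\btp N)=\alpha(N)$, it carries $({\bf 1}\btp N)\ltimes_{\Delta\boxtimes\alpha}\G$ onto $\alpha(N)\ltimes_{\Delta\otimes\id}\G$. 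Comparing the defining formulas of the two dual actions on the generators $(\Delta\boxtimes\alpha)(z)$ and $\hat z\otimes{\bf 1}$ shows that $\id\otimes T_\alpha$ intertwines $\widehat{\Delta\boxtimes\alpha}$ with the dual action of $\Delta\otimes\id$.

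Next I would identify the untwisted crossed product. Because $\Delta\otimes\id$ acts only through the first tensor leg, a check on generators gives $(\DD\vtp N)\ltimes_{\Delta\otimes\id}\G=(\DD\ltimes_\Delta\G)\vtp N$ inside $B(\DT)\vtp\DD\vtp N$, where $\DD\ltimes_\Delta\G=\{\Delta(\DD)\cup(\LLL\vtp{\bf 1})\}''$ is the crossed product of the translation action. It is standard (biduality for the translation action, cf. \cite{V}) that this crossed product is canonically $B(\DT)$: there is a normal $*$-isomorphism $\Psi:\DD\ltimes_\Delta\G\to B(\DT)$ determined on generators by $\Psi(\Delta(d))=d$ and $\Psi(\hat z\otimes{\bf 1})=\hat z$ for $d\in\DD$, $\hat z\in\LLL$. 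I then define $\Phi:=(\Psi\otimes\id_N)\circ(\id\otimes T_\alpha)$, a $*$-isomorphism onto $B(\DT)\vtp N$.

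It then remains to track the distinguished subalgebra and the actions through $\Psi\otimes\id_N$. Writing $\alpha(n)=\sum_i d_i\otimes n_i$ with $d_i\in\DD$ and $n_i\in N$, the action identity yields $(\Delta\otimes\id)\alpha(n)=\sum_i\Delta(d_i)\otimes n_i$ with each $\Delta(d_i)\in\DD\ltimes_\Delta\G$, so $(\Psi\otimes\id_N)$ sends it to $\sum_i d_i\otimes n_i=\alpha(n)$, while $\hat z\otimes{\bf 1}\otimes{\bf 1}\mapsto\hat z\otimes{\bf 1}$. Hence $\Phi$ maps $\alpha(N)\ltimes_{\Delta\otimes\id}\G$, and therefore $({\bf 1}\btp N)\ltimes_{\Delta\boxtimes\alpha}\G$, onto $\{\alpha(N)\cup(\LLL\vtp{\bf 1})\}''=N\ltimes_\alpha\G$. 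A parallel computation on the generators, using $\Psi(\Delta(d))=d$, $\Psi(\hat z\otimes{\bf 1})=\hat z$ and the fact that $\Deltaop(d)={\bf 1}\otimes d$ for $d\in\DD$, identifies the dual action transported by $\Phi$ with $\Deltaop\otimes\id$ on $B(\DT)\vtp N$; together with the first paragraph this shows $\Phi$ is $\widehat{\Delta\boxtimes\alpha}$-equivariant, where $\widehat{\Delta\boxtimes\alpha}$ on $B(\DT)\vtp N$ is read as $\Deltaop\otimes\id$.

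I expect the main obstacle to be the identification of the translation crossed product, namely producing the isomorphism $\Psi:\DD\ltimes_\Delta\G\cong B(\DT)$ with the prescribed behaviour on generators and checking that its dual action is $\Deltaop$. Once $\Psi$ is in hand the remaining steps are routine generator-by-generator verifications, which is why I would isolate that identification (and its compatibility with the dual actions) as the technical heart of the argument.
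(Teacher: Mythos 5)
Your proposal is correct and follows essentially the same route as the paper: both first transport the crossed product through $\id\otimes T_\alpha$ from Lemma \ref{braided product}, and then undo the translation crossed product, which the paper carries out in a single chain of identifications using that $V\in\LLL'\vtp\DD$ implements $\Delta\otimes\id$ and commutes with $\LLL\vtp{\bf 1}$. Your intermediate factorization through $(\DD\ltimes_\Delta\G)\vtp N$ together with the biduality isomorphism $\Psi$ is exactly the paper's $\mathrm{Ad}(V_{12})$ step in disguise, so the ``technical heart'' you isolate reduces to the same one-line commutation argument the paper writes out, and the equivariance check on generators matches the paper's as well.
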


\begin{proof}
	Consider the equivariant $*$-isomorphism $T_\alpha$,
	given by Lemma \ref{braided product},  
	from $(\DD\btp N, \Delta\boxtimes\alpha)$ onto
	$(\DD\vtp N, \Delta \otimes \id)$.
	Then the isomorphism
	$\Phi$ is obtained from the identification:
	 \begin{align*}
(\DD\btp N)&\ltimes_{\Delta\boxtimes\alpha}\G
\,=\,[\,(\Delta\boxtimes\alpha)(\DD\btp N)
\cup (\LLL\vtp {\bf 1}_{\DD\btp N})\,]''\\
&\,\cong
[\,(\id\otimes T_\alpha)(\Delta\boxtimes\alpha)(\DD\btp N)\cup (\LLL\vtp {\bf 1}_{\DD\vtp N})\,]''\\
&\,=
[\,(\Delta\otimes \id)T_\alpha(\DD\btp N)\cup
 (\LLL\vtp {\bf 1}\vtp {\bf 1})V_{12}V^*_{12}\,]''\\
&\,=
[\,(\Delta\otimes \id)T_\alpha(\DD\btp N)\cup
 V_{12}(\LLL\vtp {\bf 1}\vtp {\bf 1})V^*_{12}\,]''\\
&\,=
[\,(\Delta\otimes \id)T_\alpha(\DD\btp N)\cup
 (\Delta\otimes \id)(\LLL\vtp {\bf 1})\,]''\\
&\,\cong
[\,T_\alpha(\DD\btp N)\cup (\LLL\vtp {\bf 1})\,]''\\
&\,\cong 
[(\DD\vtp N)\cup (\LLL\vtp {\bf 1})\,]''\\
&\,=
B(\DT)\vtp N
     \end{align*}
    where in the fourth equality, we used the fact that 
    $V\in \LLL'\vtp \DD$.
    In particular, for any $x\in\DD\btp N$ and any 
    $\hat x\in \LLL$ we have
    \begin{equation}\label{Phi2}
    	    \Phi\big((\Delta\boxtimes\alpha)(x)\big)=T_\alpha(x), 
    \hspace{0.5cm}\Phi(\hat x\otimes 1_{\DD\btp N})=
    \hat x\otimes 1_{\DD\vtp N}.
    \end{equation}
	From Lemma \ref{braided product}, we know that 
	$T_\alpha({\bf 1}\boxtimes N)=\alpha(N)$, and therefore 
	by the same calculations we have 
	\[ 
({\bf 1}\btp N)\ltimes_{\Delta\boxtimes\alpha} \G \cong N\ltimes_\alpha \G.
	\]
	In order to show the equivariant condition, 
	it is sufficient to check the
	equality on the set of generators of
	$(\DD\btp N)\ltimes_{\Delta\boxtimes\alpha}\G$. Suppose that
	$x\in\DD\btp N$, then since 
	${\hat{W}}^{\text{op}}\in \LLL\vtp\DD'$, by
	(\ref{Phi2}) we have
	\begin{align*}	 
\big((\Deltaop\otimes\id)\circ \Phi\big)(\Delta\boxtimes\alpha)(x)
& =\,(\Deltaop\otimes\id)(\Phi(\Delta\boxtimes\alpha)(x))\\
& =\,(\Deltaop\otimes\id)T_\alpha(x)\\
& =\,{\hat{W}}^{{\text{op}}^*}_{12}\, (1 \otimes T_\alpha(x)) \,{\hat{W}}^{\text{op}}_{12}\\
& =\,1 \otimes T_\alpha(x)\\
& =\,1 \otimes \Phi((\Delta\boxtimes\alpha)(x))\\
& =\,(\id\otimes\Phi)\big(1\otimes(\Delta\boxtimes\alpha)(x)\big)\\
& =\,(\id\otimes\Phi)\big((\widehat{\Delta\boxtimes\alpha})
(\Delta\boxtimes\alpha)(x)\big).
\end{align*} 
	On the other hand, by (\ref{Phi2}), for any 
	$\hat x\in \LLL$ we have
	\begin{align*} 
\big((\Deltaop\otimes\id)\circ \Phi\big)(\hat x\otimes 1_{\DD\btp N})
& =\,\Deltaop(\hat x)\otimes 1_{\DD\vtp N}\\
& =\,(\id\otimes\Phi)(\widehat{\Delta\boxtimes\alpha})
(\hat x\otimes1_{\DD\btp N}). \qedhere
\end{align*} 
\end{proof}

\section{Examples}\label{sect5}
 In this section, we give some examples
 of amenable actions of discrete quantum groups 
 on von Neumann algebras.
 In Theorem \ref{relation}
 we showed that 
 the amenable quantum group $\G$ acts amenably on 
 any von Neumann algebras.
 Also, Proposition \ref{extension} shows that
 it is possible to get new amenable actions by 
 appropriate restrictions. Below we
 give more concrete examples of amenable actions.
 As an application of amenable actions, the action of any
 discrete group $G$ on $\ell^\infty(G)$ is 
 always amenable \cite[Remarques 3.7.(b)]{Del}.
 The next result is the noncommutative analogue of that.

\begin{proposition}\label{itself}
	Every discrete quantum group acts amenably on itself.
\end{proposition}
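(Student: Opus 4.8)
The plan is to write the action of $\G$ on itself as the comultiplication $\Delta:\DD\to\DD\vtp\DD$, so that $N=\DD$ and $\alpha=\Delta$, and then to exhibit the conditional expectation $E_\Delta:\DD\vtp\DD\to\Delta(\DD)$ required by Definition \ref{amenability} \emph{explicitly}, rather than routing through the braided-tensor-product machinery. The single external ingredient I would invoke is that a discrete quantum group carries a \emph{normal} counit: since $\DD=\bigoplus_{i\in I}M_{n_i}(\C)$ has the trivial block $\C$ as one summand, the counit $\epsilon:\DD\to\C$ is the normal $*$-character that projects onto that block, and it satisfies the counit laws $(\epsilon\otimes\id)\Delta=\id=(\id\otimes\epsilon)\Delta$. (When $\G=\ell^\infty(G)$ this is just $\epsilon=\mathrm{ev}_e$, and the construction below reduces to the classical argument of \cite[Remarques 3.7.(b)]{Del}.)

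With $\epsilon$ available I would set
\[
E_\Delta:=\Delta\circ(\id\otimes\epsilon):\DD\vtp\DD\longrightarrow\Delta(\DD).
\]
Because $\epsilon$ is a normal state, the slice $(\id\otimes\epsilon):\DD\vtp\DD\to\DD$ is normal, unital and completely positive, and $\Delta$ is a normal unital $*$-homomorphism; hence $E_\Delta$ is normal, unital and completely positive. The counit law yields $E_\Delta(\Delta(a))=\Delta\big((\id\otimes\epsilon)\Delta(a)\big)=\Delta(a)$, so $E_\Delta$ is the identity on $\Delta(\DD)$, has range exactly $\Delta(\DD)$, and is therefore idempotent. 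A normal, unital, completely positive idempotent onto a von Neumann subalgebra is a conditional expectation by Tomiyama's theorem, so $E_\Delta$ is a conditional expectation onto $\alpha(N)=\Delta(\DD)$.

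It then remains to verify the equivariance condition $(\id\otimes E_\Delta)(\Delta\otimes\id)=(\Delta\otimes\id)E_\Delta$. The key elementary identity is $(\id\otimes\id\otimes\epsilon)(\Delta\otimes\id)=\Delta\circ(\id\otimes\epsilon)$, which one reads off on a simple tensor $a\otimes b$ (both sides equal $\epsilon(b)\Delta(a)$); consequently
\[
(\id\otimes E_\Delta)(\Delta\otimes\id)=(\id\otimes\Delta)(\id\otimes\id\otimes\epsilon)(\Delta\otimes\id)=(\id\otimes\Delta)\Delta(\id\otimes\epsilon).
\]
Co-associativity $(\id\otimes\Delta)\Delta=(\Delta\otimes\id)\Delta$ rewrites the right-hand side as $(\Delta\otimes\id)\Delta(\id\otimes\epsilon)=(\Delta\otimes\id)E_\Delta$, which is exactly the required identity. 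Hence $\alpha=\Delta$ is amenable.

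Every computation above is formal once the counit is in hand, so the one step I would pin down first—and the only genuine obstacle—is the normality of $\epsilon$ and the validity of the counit laws for an arbitrary discrete quantum group. This is precisely where the \emph{discreteness} hypothesis is used: for compact quantum groups the counit is typically only densely defined, whereas for discrete $\G$ the counit is the normal projection of $\DD=\bigoplus_{i}M_{n_i}(\C)$ onto its trivial block, so it is manifestly bounded and weak*-continuous and no difficulty arises. It is worth remarking that, unlike Theorem \ref{relation}, this result does \emph{not} follow from producing an invariant state on $\DD$ (there is none when $\G$ is non-amenable); the point is that the explicit $E_\Delta$ is forced by the counit and co-associativity, independently of amenability of $\G$.
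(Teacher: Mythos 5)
Your proposal is correct and is essentially identical to the paper's own proof: both define $E_\Delta=\Delta\circ(\id\otimes\varepsilon)$ using the normal counit of the discrete quantum group and verify the equivariance condition via the counit law together with co-associativity. The extra justifications you supply (normality of $\varepsilon$ from the trivial block, Tomiyama's theorem for the idempotent) are sound but not a different route.
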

\begin{proof}
	Define the map $\Phi: \DD\vtp\DD\to \DD$ by 
	$\Phi(A)=(\id\otimes \varepsilon)(A)$, in which $\varepsilon$ 
	is the co-unit in $\DO$. Then 
	\[
	\Phi\circ\Delta = (\id\otimes\varepsilon)\Delta = \id.
	\]
	So the map $\Phi$ is a left inverse of 
	the co-multiplication $\Delta$ and
	therefore the map $E_\Delta:=\Delta\circ\Phi$ is 
	a conditional expectation 
	from $\DD\vtp\DD$ onto $\Delta(\DD)$.
	Moreover, for any $A\in\DD\vtp\DD$ we have 
	\begin{align*}
	 (\id\otimes E_\Delta)(\Delta\otimes\id)(A) & =\,
     (\id\otimes\Delta\circ\Phi)(\Delta\otimes\id)(A)\\
& =\,(\id\otimes\Delta)(\id\otimes\id\otimes\varepsilon)
(\Delta\otimes\id)(A)\\  
& =\,(\id\otimes\Delta)\Delta\big((\id\otimes \varepsilon)(A)\big)\\
& =\, (\Delta\otimes\id)\big((\Delta\circ\Phi)(A)\big)\\
& =\, (\Delta\otimes\id)E_\Delta(A).
\end{align*} 
Hence  
$E_\Delta:(\DD\vtp\DD, \Delta\otimes\id)\to
(\Delta(\DD), \Delta\otimes\id)$
is an equivariant conditional expectation.
\end{proof}

For a discrete quantum group $\G$ the restriction of 
the extended comultiplication
$\Delta^\ell$ to $\LLL$ provides an action
of $\G$ on the von Neumann
algebra $\LLL$.
we next prove in the
case of Kac algebras, amenability of the latter action 
is equivalent to amenability of the
Kac algebra $\G$.

\begin{theorem}\label{sufficient condition}
	Let $\G$ be a discrete Kac algebra. Then $\G$ is amenable 
	if and only if the canonical action
	$\Delta^\ell_{|_{\LLL}}:\G\curvearrowright\LLL$ is amenable. 
\end{theorem}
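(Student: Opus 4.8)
The plan is to reduce everything to Theorem~\ref{relation}. That theorem characterizes amenability of $\G$ as amenability of an action $\alpha$ \emph{together with} the existence of an $\alpha$-invariant state, so to obtain the stated equivalence it suffices to check that the canonical action $\alpha:=\Delta^\ell_{|_{\LLL}}$ always carries an invariant state, independently of any amenability hypothesis. The natural candidate is the Haar state $\hat\fee$ of the dual compact quantum group $\hat\G$. Granting its invariance, the proof concludes immediately: if $\G$ is amenable, then $\alpha$ is amenable by the implication (1)$\Rightarrow$(2) of Theorem~\ref{relation}; conversely, if $\alpha$ is amenable, then since the invariant state $\hat\fee$ is at hand, (2)$\Rightarrow$(1) yields amenability of $\G$.

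The substance of the argument is therefore the invariance of $\hat\fee$ under $\alpha$, which I would read off from~(\ref{E_0}). The key structural fact is $W\in\DD\vtp\LLL$, which forces $\alpha(\hat x)=W^*(1\otimes\hat x)W$ to lie in $\DD\vtp\LLL$ for every $\hat x\in\LLL$; in particular the second leg of $\alpha(\hat x)$ sits inside $\LLL$. On $\LLL$ the conditional expectation $E_0$ acts as $E_0(\hat y)=\hat\fee(\hat y)\mathbf 1$, so slicing the second leg of $\alpha(\hat x)$ by $E_0$ agrees with slicing it by $\hat\fee$ and then adjoining $\mathbf 1$:
\[
(\id\otimes E_0)\Delta^\ell(\hat x)=\big((\id\otimes\hat\fee)\Delta^\ell(\hat x)\big)\otimes\mathbf 1 .
\]
Comparing with~(\ref{E_0}), namely $(\id\otimes E_0)\Delta^\ell(\hat x)=\hat\fee(\hat x)\,1\otimes 1$, and cancelling the common factor $\otimes\mathbf 1$, I obtain $(\id\otimes\hat\fee)\Delta^\ell(\hat x)=\hat\fee(\hat x)\,1$. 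Pairing the first leg against an arbitrary $f\in\DO$ turns this into $(f\otimes\hat\fee)\alpha(\hat x)=\la f,1\ra\,\hat\fee(\hat x)$, which is precisely the invariance of $\hat\fee$ for $\alpha$.

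The only delicate point is the displayed leg identity: it is legitimate exactly because $\alpha(\LLL)\subseteq\DD\vtp\LLL$, so that the inner $E_0$ is only ever evaluated on elements of $\LLL$, where it collapses to $\hat\fee(\cdot)\mathbf 1$; this is where the placement $W\in\DD\vtp\LLL$ of the fundamental unitary is essential. Once this is secured, the rest is a purely formal invocation of Theorem~\ref{relation}. I would note in passing that this argument never uses the Kac condition beyond what is already recorded in the preliminaries, so the existence of the invariant Haar state---and hence the equivalence---in fact persists for arbitrary discrete quantum groups; the Kac hypothesis is natural here mainly because it is what makes $\LLL$ again a (dual) Kac algebra, giving the statement its intended reading as amenability of the action of $\G$ on its dual Kac algebra.
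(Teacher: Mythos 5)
Your high-level strategy is exactly the paper's: reduce to Theorem~\ref{relation} by producing an invariant state for $\Delta^\ell_{|_{\LLL}}$, and take that state to be the dual Haar state $\hat\fee$. The divergence is in how invariance of $\hat\fee$ is justified, and that is where your argument has a genuine gap. The paper gets invariance from \cite[Corollary 3.9]{I}, a result whose proof uses the Kac hypothesis (traciality of $\hat\fee$). Your derivation from~(\ref{E_0}) uses the Kac condition nowhere, and, as you yourself observe, it would prove that $\hat\fee$ is invariant under the adjoint action $\Delta^\ell_{|_{\LLL}}$ for \emph{every} discrete quantum group. That conclusion is false. For $\G=\widehat{SU_q(2)}$ with $0<q<1$, a direct computation with the fundamental corepresentation $u=\begin{pmatrix}\alpha & -q\gamma^*\\ \gamma & \alpha^*\end{pmatrix}$ shows that the matrix $\big[\sum_i\hat\fee\big(u_{ij}^*\,\hat x\,u_{il}\big)\big]_{j,l}$ is not $\hat\fee(\hat x)I$ already for $\hat x=\gamma^*\gamma$: its trace works out to differ from $2\hat\fee(\gamma^*\gamma)$ by a nonzero multiple of $(1-q^2)^2$, and since the trace is unchanged under unitary equivalence of the corepresentation, no choice of convention for $W$ rescues the identity. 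An argument that proves invariance without the Kac hypothesis therefore proves too much and cannot be correct.

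The precise weak link is the combination of the two quoted properties of $E_0$: the $\Delta^\ell$-covariance~(\ref{covariant}) and the identity $E_0(\hat y)=\hat\fee(\hat y){\bf 1}$ on $\LLL$. Your computation shows that these two properties, together with normality of $E_0$ and the (correct) observation that $\Delta^\ell(\LLL)\subseteq\DD\vtp\LLL$, \emph{force} adjoint-invariance of $\hat\fee$. Since that invariance fails in the non-Kac case, no single normal conditional expectation can satisfy both properties simultaneously unless $\G$ is of Kac type; the fact you are leaning on is not an innocuous preliminary but is essentially equivalent to the invariance you are trying to establish, and it is only available under the Kac assumption. Consequently your closing claim that the equivalence ``persists for arbitrary discrete quantum groups'' is unjustified: as the paper's remark following this theorem indicates, for general $\G$ the existence of an invariant state for $\Delta^\ell_{|_{\LLL}}$ is precisely Crann's inner amenability, an additional hypothesis rather than an automatic feature. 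To repair your proof you must reinstate the Kac hypothesis at the point where invariance of $\hat\fee$ is established, e.g.\ by citing \cite[Corollary 3.9]{I} as the paper does, or by a direct argument using traciality of $\hat\fee$.
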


\begin{proof} 
Since $\G$ is a
Kac algebra, by \cite[Corollary 3.9]{I}, the
tracial Haar state $\hat\fee$ of the dual 
quantum group $\hat\G$ is invariant 
with respect to the action $\Delta^\ell_{|_{\LLL}}$. Hence by
Theorem \ref{relation}, amenability of the action
$\Delta^\ell_{|_{\LLL}}$ is
equivalent to amenability of $\G$.
\end{proof}

\begin{remark}
In \cite{Ja-inner}, Crann defined 
a notion of inner amenability for quantum groups as
the existence of an invariant state for
the canonical action $\Delta^\ell_{|_{\LLL}}:\G\curvearrowright\LLL$.
The same proof shows that Theorem \ref{sufficient condition}
holds for the inner amenable quantum group $\G$ in the sense of Crann.
\end{remark}
 In the next result, we state the noncommutative version
of Zimmer's classical result \cite[Theorem 5.2]{ZIM} 
that all Poisson boundaries are amenable $G$-space.
Let us first recall the definition of 
noncommutative Poisson boundaries in the sense of Izumi 
\cite{II}. Let $\mu\in\DO$ be a state. Recall in this case 
$\Phi_\mu(x)= (\mu\otimes\id)\Delta(x)$ is 
a unital, normal completely 
positive map on $\DD$.
The space of fixed point
$\h_\mu = \{ x\in\DD: \Phi_\mu(x)=x\}$ 
is a $w^*$-closed operator system in $\DD$.
There is a conditional expectation
$\E$ from $\DD$ onto $\h_\mu$. Then
the corresponding Choi--Effros product
induces the von Neumann algebraic structure 
on $\h_\mu$ \cite{Choi-Eff}. This von Neumann algebra 
is called {\it noncommutative Poisson boundary}
 with respect to $\mu$. For more details on
 noncommutative Poisson boundaries we refer the reader
 to \cite{KNR} and \cite{KNR2}.
By \cite[Proposition 2.1]{KNR2}, the restriction of $\Delta$
to $\h_\mu$ induces a left action $\Delta_\mu$ of $\G$ on the 
von Neumann algebra $\h_\mu$. We prove
this action is amenable.

\begin{theorem}\label{Poisson}
	Let $\G$ be a discrete quantum group and let $\mu\in \DO$
	be a state. The left
	action $\Delta_\mu$ of $\G$ on the 
	Poisson boundary $\h_\mu$ is amenable.
\end{theorem}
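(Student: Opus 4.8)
Let me sketch a proof.

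The plan is to realize the action $\Delta_\mu : \G \curvearrowright \h_\mu$ as a restriction of the canonical action $\Delta : \G \curvearrowright \DD$ (the action of $\G$ on itself), and then deploy Proposition \ref{extension} to transfer amenability. I first note that by Proposition \ref{itself}, the action of $\G$ on itself by $\Delta$ is amenable, witnessed by the equivariant conditional expectation $E_\Delta = \Delta \circ \Phi$ from $(\DD\vtp\DD, \Delta\otimes\id)$ onto $(\Delta(\DD), \Delta\otimes\id)$, where $\Phi = (\id\otimes\varepsilon)$.

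The key structural observation is that the Poisson boundary $\h_\mu$ sits inside $\DD$ as a $w^*$-closed subspace with a normal conditional expectation $\E : \DD \to \h_\mu$ (given in the excerpt via the Choi--Effros construction), and that $\Delta_\mu$ is precisely the restriction of $\Delta$ to $\h_\mu$. To invoke Proposition \ref{extension}(1), I must check that the triple $(\DD, \G, \Delta)$ is an extension of $(\h_\mu, \G, \Delta_\mu)$ in the sense of the Definition preceding Proposition \ref{extension}: that $\Delta_\mu$ is the restriction of $\Delta$ to $\h_\mu$ (which holds by \cite[Proposition 2.1]{KNR2}), and that there exists a conditional expectation from $\DD$ onto $\h_\mu$ (which is exactly $\E$). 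Once this extension structure is in place, Proposition \ref{extension}(1) applied with $M = \h_\mu$, $N = \DD$, $\alpha = \Delta_\mu$, $\beta = \Delta$ yields that the pair $(\DD, \h_\mu)$ is amenable, i.e., there is an equivariant conditional expectation $P$ from $(\DD, \Delta)$ onto $(\h_\mu, \Delta_\mu)$.

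However, amenability of the \emph{pair} is not literally the same as amenability of the action $\Delta_\mu$ in the sense of Definition \ref{amenability}. So the real work is to go the other direction and produce the required $E_{\Delta_\mu} : \DD \vtp \h_\mu \to \Delta_\mu(\h_\mu)$. The cleaner route is to invoke Proposition \ref{extension}(2) with the roles arranged so that amenability of $\Delta_\mu$ follows from amenability of the ambient action $\Delta$ together with amenability of the pair: I set $\beta = \Delta$ on $N = \DD$, which is amenable by Proposition \ref{itself}, and I use that the pair $(\DD, \h_\mu)$ is amenable via the equivariant conditional expectation $\E$ itself — here I must verify that $\E$ is $(\Delta, \Delta_\mu)$-equivariant, namely $(\id\otimes\E)\Delta = \Delta_\mu\circ\E$. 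This equivariance is the crucial point and should follow from the defining relation $\Delta\circ\Phi_\mu = (\id\otimes\Phi_\mu)\Delta$ together with the harmonicity characterization of $\h_\mu$ and the fact that $\E$ arises as a pointwise $w^*$-limit of iterates of $\Phi_\mu$ (or Cesàro averages thereof), so that $\Delta$ intertwines $\E$ with $\Delta_\mu$ by continuity. Granting this, Proposition \ref{extension}(2) gives that $\Delta_\mu$ is amenable.

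The main obstacle is precisely establishing that $\E$ is equivariant, $(\id\otimes\E)\Delta = \Delta_\mu \circ \E$. The subtlety is that $\E$ is a conditional expectation onto an operator system whose von Neumann structure comes from the Choi--Effros product rather than from the ambient product on $\DD$, so one cannot treat $\E$ naively as an algebra map; one must argue at the level of the underlying linear maps using the $w^*$-convergence of the averaging process defining $\E$ and the normality of $\Delta$. I expect the verification to rest on the identity $\Phi_\mu = (\mu\otimes\id)\Delta$ commuting appropriately with $\Delta$ via co-associativity, from which the intertwining of the limit projection $\E$ with $\Delta$ follows. Once that equivariance is secured, the rest is a direct application of the extension machinery of Proposition \ref{extension}.
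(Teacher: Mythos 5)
Your proposal is correct and follows essentially the same route as the paper: amenability of the action of $\G$ on itself (Proposition \ref{itself}), equivariance of the Izumi conditional expectation $\E:\DD\to\h_\mu$ giving amenability of the pair $(\DD,\h_\mu)$, and then Proposition \ref{extension}(2). The only difference is that you sketch the verification that $\E$ is $(\Delta,\Delta_\mu)$-equivariant via the Ces\`aro-average construction, whereas the paper simply cites \cite[Proposition 2.1]{KNR2} for this fact.
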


\begin{proof}
	The conditional expectation $\E:\DD\to\h_\mu$ is equivariant,
	see e.g. the proof of \cite[Proposition 2.1]{KNR2},
	and therefore the pair 
	$(\DD, \h_\mu)$ is amenable. Since by Proposition \ref{itself}
	the action of discrete quantum group
	on itself is amenable, it follows from
	Proposition \ref{extension} that
	the left action $\Delta_{\mu}$ 
	is amenable.
\end{proof}
\begin{remark}
In \cite{VV}, Vaes and Vergnioux introduced 
the amenable action of a discrete quantum group
on a unital C*-algebra. They proved that
the canonical C*-algebraic action of a universal 
discrete quantum group on its boundary is always 
amenable. Therefore the related crossed product
becomes nuclear. 
\end{remark}

\section{Amenable actions and crossed products: Kac algebra case}\label{sect6}

In this section we characterize amenability of actions
in term of von Neumann algebra crossed products.
Classically, the action $\alpha:G\curvearrowright X$
of a discrete group $G$ on a standard probability space
$(X, \nu)$ is amenable if and only if 
the crossed product $L^{\infty}(X, \nu)\ltimes_\alpha G$
is injective. Delaroche extended this
result to the action of locally compact groups on
arbitrary von Neumann algebras.
We prove a noncommutative version of this result
in the case of discrete Kac algebra actions 
on von Neumann algebras. This in particular generalizes
a part of Theorem 4.5 in \cite{Ruan} and also 
\cite[Corollary 3.17]{Reiji} which
establish the equivalence between amenability of a
discrete Kac algebra $\G$ and
injectivity of $\LLL$. 
The case of actions of general discrete quantum groups
on von Neumann algebras is discussed in next section.

\begin{lemma}\label{Kac}
	Let $\alpha:\G\curvearrowright N$ be an 
	action of a discrete Kac algbera $\G$ on 
	a von Neumann algebra
	$N$ and let $M$ be a von Neumann subalgebra of $N$ 
	which is invariant under
    $\alpha$. The following are equivalent:
	\begin{itemize}
		\item [1.] There is an equivariant conditional expectation
		            $P:(N, \alpha)\to (M, \alpha)$.
		\item [2.] There is a conditional expectation 
		           $E:N\ltimes_\alpha \G \to M\ltimes_\alpha \G$.
	\end{itemize}
\end{lemma}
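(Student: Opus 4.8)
The plan is to prove the two implications separately, exploiting the dual action $\hat\alpha$ and the fixed-point characterization $\alpha(N)=(N\ltimes_\alpha\G)^{\hat\alpha}$ recalled in Section~\ref{sect2}. The key structural fact I will lean on is that, in the Kac case, the dual compact quantum group $\hat\G$ carries a tracial invariant Haar state $\hat\fee$, which makes the dual action $\hat\alpha$ of $(\hat\G,\Deltaop)$ an action of a \emph{compact} Kac algebra. Averaging against such a Haar state produces equivariant conditional expectations onto fixed-point algebras, and this averaging is the engine behind both directions.

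For the implication $(1)\Rightarrow(2)$, I would start from the equivariant conditional expectation $P:(N,\alpha)\to(M,\alpha)$, meaning $(\id\otimes P)\alpha=\alpha\circ P$. The natural candidate for $E$ is built by transporting $P$ through the crossed-product picture: since $N\ltimes_\alpha\G$ is generated by $\alpha(N)$ and $\LLL\vtp{\bf 1}$, and $M\ltimes_\alpha\G$ by $\alpha(M)$ and the same copy of $\LLL$, I want a normal unital completely positive map that restricts to $\alpha\circ P\circ\alpha^{-1}$ on $\alpha(N)$ and to the identity on $\LLL\vtp{\bf 1}$. The equivariance of $P$ is exactly what guarantees these two prescriptions are compatible and extend to a well-defined conditional expectation on the generated von Neumann algebra; concretely, $E:=\id\otimes P$ (suitably interpreted inside $B(\DT)\vtp N$) should map $N\ltimes_\alpha\G$ onto $M\ltimes_\alpha\G$ because $P$ commutes with $\alpha$ and hence with the right copy of $\LLL$. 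I expect this direction to be the more routine one, essentially a verification that $\id\otimes P$ descends correctly.

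For the harder implication $(2)\Rightarrow(1)$, I would begin with a conditional expectation $E:N\ltimes_\alpha\G\to M\ltimes_\alpha\G$, which need not be equivariant or even canonical, and manufacture from it an \emph{equivariant} expectation $P$ downstairs. The strategy is to first average $E$ over the dual action $\hat\alpha$ to replace it by a $\hat\alpha$-equivariant conditional expectation, using the tracial Haar state $\hat\fee$ of $\hat\G$: set $E':=(\hat\fee\otimes\id)\circ\hat\alpha_M\circ E\circ(\text{suitable unwinding of }\hat\alpha)$, where the Kac/traciality hypothesis ensures that this averaging preserves complete positivity and the conditional-expectation property and yields a map commuting with $\hat\alpha$. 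Once $E$ is made $\hat\alpha$-equivariant, I restrict it to the fixed-point algebras: by $\alpha(N)=(N\ltimes_\alpha\G)^{\hat\alpha}$ and the analogous statement for $M$, an $\hat\alpha$-equivariant expectation carries $\alpha(N)$ into $\alpha(M)$, and conjugating by $\alpha^{-1}$ produces a conditional expectation $P:N\to M$. The equivariance $(\id\otimes P)\alpha=\alpha\circ P$ should then follow because the defining relation of $N\ltimes_\alpha\G$ (Theorem~\ref{characterization}) intertwines $\alpha$ with the action $\beta$ on $B(\DT)\vtp N$, and an expectation commuting with the dual action automatically respects this intertwining on the fixed-point level.

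The main obstacle I anticipate is carrying out the averaging step cleanly: one must verify that integrating $E$ against the Haar state $\hat\fee$ of the \emph{compact} dual $\hat\G$ genuinely lands back in $M\ltimes_\alpha\G$ and stays a conditional expectation, and this is precisely where the Kac hypothesis (traciality of $\hat\fee$, so that $\hat\G$ is a compact Kac algebra and the averaging map is a faithful normal conditional expectation onto the fixed points) is indispensable. Without traciality the averaging need not be well-behaved, which is exactly why this lemma is stated for Kac algebras and why the general quantum-group case (Theorem~\ref{characterization1}) requires the extra equivariance condition that cannot be removed. I would therefore spend most of the care on showing that the $\hat\fee$-average of a conditional expectation between the crossed products is again a conditional expectation and is $\hat\alpha$-equivariant, after which the passage to fixed points and the transport by $\alpha^{-1}$ are formal.
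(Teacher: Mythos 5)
Your direction $(1)\Rightarrow(2)$ is essentially the paper's argument: one takes $E=\id\otimes P$ on $B(\DT)\vtp N$ and checks that it commutes with the action $\beta$ of Theorem \ref{characterization}, so that by the fixed-point characterization $N\ltimes_\alpha\G=(B(\DT)\vtp N)^\beta$ its restriction lands in $M\ltimes_\alpha\G$. (Your phrasing about ``prescribing the map on generators and extending'' is not how a normal map on a von Neumann algebra is produced, but the map you name is the right one and the fixed-point argument you gesture at is exactly what makes it descend.)

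The gap is in $(2)\Rightarrow(1)$, at the averaging step, which is the heart of your plan and is left as a black box with a formula that does not parse. A coaction $\hat\alpha$ has no inverse, so there is no ``suitable unwinding of $\hat\alpha$'' to insert; and the natural candidate $E':=(\hat\fee\otimes\id)\circ(\id\otimes E)\circ\hat\alpha$ is \emph{not} a conditional expectation onto $M\ltimes_\alpha\G$: on $M\ltimes_\alpha\G$ it restricts to $(\hat\fee\otimes\id)\circ\hat\alpha=E_{\hat\fee}$, the canonical expectation onto $\alpha(M)$, rather than to the identity. An averaging with the correct properties would require a unitary implementation of $\hat\alpha$ living in the right algebra, and nothing in your outline supplies this. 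The paper sidesteps averaging entirely: it keeps the given $E$ as is, post-composes with the canonical conditional expectation $E_{\hat\fee}(x)=(\hat\fee\otimes\id)\hat\alpha(x)$ from $M\ltimes_\alpha\G$ onto $\alpha(M)=(M\ltimes_\alpha\G)^{\hat\alpha}$, and proves by a direct computation that $E_{\hat\fee}\circ E$ is equivariant for $\Delta\otimes\id$ (not for $\hat\alpha$). The traciality of $\hat\fee$ enters precisely there, in showing $(\id\otimes E_{\hat\fee})(\Delta\otimes\id)(\hat x\otimes 1)=\hat\fee(\hat x)\,1\otimes1\otimes1$ via the identity $\sum_j\hat\fee\big(a_j^*\hat x a_j\big)=\sum_j\hat\fee\big(a_ja_j^*\hat x\big)$ for the slices $a_j=(\omega_{\xi,e_j}\otimes\id)(W)$ and the modular conjugation $\hat J$; the $(\Delta\otimes\id)$-equivariance of $E$ itself is automatic from $W\in\DD\vtp\LLL$ and the $\LLL\otimes1$-bimodule property. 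Once $E_{\hat\fee}\circ E$ is $(\Delta\otimes\id)$-equivariant, $P:=\alpha^{-1}\circ E_{\hat\fee}\circ E\circ\alpha$ is the desired equivariant expectation. Your second half (that a $\hat\alpha$-equivariant expectation maps $\alpha(N)$ into $\alpha(M)$ and yields an equivariant $P$) is sound and is exactly what the paper does in Lemma \ref{crossed product} for general $\G$ --- but there the $\hat\alpha$-equivariance is a \emph{hypothesis}, precisely because no one knows how to manufacture it from an arbitrary $E$ by averaging.
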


\begin{proof}
	(1) $\Rightarrow $ (2): since $P:(N, \alpha)\to (M,\alpha)$
	 is an equivariant conditional expectation, it follows
	 $E=\id\otimes P$ is a conditional expectation 
	 from $B(\DT)\vtp N$ onto $B(\DT)\vtp M$ such that
	\[
	(\chi\otimes\id)(\id\otimes\alpha)E=(\id\otimes E)(\chi\otimes\id)(\id\otimes\alpha),
	\]
	where $\chi$ is the flip map.
	Then $\id\otimes E$ is a conditional 
	expectation from $\DD\vtp B(\DT)\vtp N$ 
    onto $\DD\vtp B(\DT)\vtp M$.  Recall the
    left action $\beta$ of $\G$ on $B(\DT)\vtp N$, defined in Theorem 
    \ref{characterization}. One can see that for any $y\in B(\DT)\vtp N$
    we have $\beta \circ E(y)=(\id\otimes E)\beta(y)$.
    In particular, if 
	$y\in (B(\DT)\vtp N)^\beta$ we have
	\[
	\beta \circ E(y)=(\id\otimes E)\beta(y)=
	(\id \otimes E)(1\otimes y)=1\otimes E(y),
	\]
	which implies $E(y)\in (B(\DT)\vtp M)^\beta$. Thus
	in view of Theorem \ref{characterization}, the restriction of 
	$E$ is a conditional expectation from 
	$N\ltimes_\alpha \G$ onto $M\ltimes_\alpha \G$.\\
	(2) $\Rightarrow $ (1): suppose 
	$E: N\ltimes_\alpha \G\to M\ltimes_\alpha \G$
	is the conditional expectation and 
	$\hat\fee$ is the tracial Haar state of the
	dual Kac algebra $\hat\G$.
	There is a canonical 
	conditional expectation $E_{\hat\fee}$ 
	from $M\ltimes_\alpha \G$
	onto $\alpha(M)$ defined by 
	\begin{equation}\label{canonical expectation}
	E_{\hat\fee}(x)=(\hat\fee\otimes\id)\hat\alpha(x),
	\,\,\,\,\,\,{\text {for all}}\, x\in M\ltimes_\alpha \G.
	\end{equation}
	We claim that 
	$E_{\hat\fee}\circ E: (N\ltimes_\alpha \G, \Delta\otimes\id)
	\to (\alpha(M), \Delta\otimes\id)$ is an
	equivariant conditional expectation.
	Since the left fundamental unitary
    $W$ lies in $\DD\vtp \LLL$, for all
    $z\in N\ltimes_\alpha \G$ we have
    \begin{align*}
(\id \otimes E)(\Delta \otimes \id)(z) 
& =\,
(\id \otimes E)(W_{12}^*z_{23}W_{12})\\
& =\, 
W_{12}^*(1 \otimes E(z))W_{12}\\
& =\,
(\Delta\otimes \id)(E(z)).
\end{align*} 
Therefore in order to conclude the claim, it is sufficient to show that 
the canonical conditional expectation 
$E_{\hat\fee}:(M\ltimes_\alpha \G, \Delta\otimes\id)
\to (\alpha(M), \Delta\otimes\id)$ is
equivariant. First, consider $\hat x\in \LLL$. Then 
    \begin{align*}
(\id \otimes E_{\hat\fee})(\Delta \otimes \id)(&\hat x\otimes 1) 
\, =\,
(\id \otimes E_{\hat\fee})(W_{12}^*(1\otimes \hat x\otimes 1) W_{12})\\
& =\, 
(\id \otimes \hat\fee\otimes \id)(\id\otimes \hat\alpha)(W_{12}^*(1\otimes \hat x\otimes 1) W_{12})\\
& =\,
(\id \otimes \hat\fee\otimes \id\otimes \id)(\id\otimes \Deltaop\otimes \id)(W_{12}^*(1\otimes \hat x\otimes 1) W_{12})\\
& =\, 
(\id \otimes \id \otimes \hat\fee\otimes \id)(\id\otimes \hat\Delta\otimes \id)(W_{12}^*(1\otimes \hat x\otimes 1) W_{12})\\
& =\,
(\id \otimes \id \otimes \hat\fee\otimes \id)(W_{13}^*(1\otimes 1\otimes \hat x\otimes 1) W_{13}).
\end{align*}
    Now consider a complete orthonormal 
    system $\{e_j\}_{j\in J}$. Then
    Similarly to the proof of 
	\cite[Corollary 3.17]{Reiji}, for 
	any normal states $f\in\DO$ and
	$\omega\in M_*$, and any vector state  
	$\omega_\xi\in\mathcal{T}(\DT)$ we get
\begin{align*}
\la \omega_\xi\otimes f\otimes \omega , (\id \otimes E_{\hat\fee})(\Delta
&\otimes \id)(\hat x\otimes 1) \rangle\,=\,f(1)\omega(1)
\la \omega_\xi \otimes \hat\fee , W^*(1\otimes \hat x) W \rangle\\
&=\,\la \hat\fee , (\omega_\xi\otimes \id)(W^*(1\otimes \hat x) W)\rangle  \\
&=\,\la \hat\fee , \sum_{j\in J}{{(\omega_{\xi,e_j}\otimes \id)(W)}^*
\hat x (\omega_{\xi,e_j}\otimes \id)(W)}\rangle\\
&=\,\sum_{j\in J}\la \hat\fee ,{{(\omega_{\xi,e_j}\otimes \id)(W)}^*
\hat x (\omega_{\xi,e_j}\otimes \id)(W)}\rangle\\
&=\,\sum_{j\in J}\la \hat\fee ,{{(\omega_{\xi,e_j}\otimes \id)(W)(\omega_{\xi,e_j}\otimes \id)(W)}^*\hat x}\rangle\\
&=\,\sum_{j\in J}\la \hat\fee ,{{(\omega_{\hat{J}\xi,\hat{J}e_j}\otimes \id)(W)^*(\omega_{\hat{J}\xi,\hat{J}e_p}\otimes \id)(W)}\hat x}\rangle\\
&=\,\omega_{\hat{J}\xi}(1)\hat\fee(\hat x)\\
&=\,\omega_\xi(1)\hat\fee(\hat x),
\end{align*}
where $\hat J$ is the modular conjugation for the
tracial Haar state $\hat\fee$.	
	So for any $\hat x\in\LLL$ we have  
	\begin{align*}
(\id \otimes E_{\hat\fee})(\Delta \otimes \id)(\hat x\otimes 1)
& =\,
\hat\fee(\hat x) 1\otimes 1\otimes 1\\
& =\, 
(\Delta\otimes \id)(\hat \fee\otimes \id \otimes \id)(\Deltaop(\hat x)
\otimes 1)\\
& =\,
(\Delta\otimes \id)(\hat \fee\otimes \id)
\big(\hat\alpha(\hat x\otimes 1)\big)\\
& =\,
(\Delta\otimes \id)E_{\hat\fee}(\hat x\otimes 1).
\end{align*}
	Hence for any $\hat x\in \LLL$ and $x\in M$
	we get
	\begin{align*}
(\id \otimes E_{\hat\fee})(\Delta \otimes \id)
\big((\hat x\otimes 1)\alpha(x)\big)
& =\, 
(\id \otimes E_{\hat\fee})\big((\Delta \otimes \id)(\hat x\otimes 1)
(\Delta \otimes \id)\alpha(x)\big)\\
& =\,
(\id \otimes E_{\hat\fee})\big((\Delta \otimes \id)(\hat x\otimes 1)
(\id \otimes \alpha)\alpha(x)\big)\\
& =\,
(\id \otimes E_{\hat\fee})\big((\Delta \otimes \id)(\hat x\otimes 1)\big)
(\id \otimes \alpha)\alpha(x)\\
& =\,
(\Delta \otimes \id)\big(E_{\hat\fee}(\hat x\otimes 1)\big)
(\Delta \otimes \id)\alpha(x)\\
& =\,
(\Delta \otimes \id)E_{\hat\fee}\big((\hat x\otimes 1)\alpha(x)\big).
\end{align*} 

Since the crossed product $M \ltimes_\alpha \G$
is generated by $\{\DDD\vtp {\bf{1}}, \alpha(M)\}$, 
it follows that 
the conditional expectation 
$E_{\hat\fee}$ is $(\Delta\otimes\id)$-equivariant 
which completes the proof of claim. 
Define the conditional expectation 
$P:=\alpha^{-1}\circ E_{\hat\fee} \circ E \circ \alpha$ from $N$
onto $M$. We show that $(\id\otimes P)\alpha\,=\,\alpha\circ P$.
Since $E_{\hat\fee}\circ E$ is equivariant
with respect to the action $\Delta\otimes\id$, 
for all $a\in N$ we have
\begin{align*}
(\id \otimes E_{\hat\fee}\circ E)(\id \otimes \alpha)\alpha(a) 
&\,=(\id \otimes E_{\hat\fee}\circ E)(\Delta \otimes \id)\alpha(a) \\
&\,=(\Delta \otimes \id)(E_{\hat\fee}\circ E)\alpha(a)\\
&\,=(\id \otimes \alpha)\circ (E_{\hat\fee}\circ E)(\alpha(a)),
\end{align*}
where in the last equality we use that
$(E_{\hat\fee}\circ E)\alpha(a)\in \alpha(M)$.
Now it follows 
\[
(\id\otimes P)\alpha = (\id \otimes \alpha^{-1})
(\id \otimes E_{\hat\fee}\circ E)(\id \otimes \alpha)\alpha = 
E_{\hat\fee}\circ E\circ \alpha = \alpha\circ P. \qedhere
\]
	
\end{proof}

The following is the noncommutative analogue of 
the main result of \cite{Del} for discrete Kac algebra actions:
(See \cite[Theorem 4.2]{Del}.)
\begin{theorem}\label{Del for Kac}
	Let $\alpha:\G\curvearrowright N$ be an action of a 
	discrete Kac algebra $\G$ on a von Neumann algebra 
	$N$. The following are equivalent:
	\begin{itemize}
	\item [1.] The action $\alpha$ is amenable.
	\item [2.] There is a conditional expectation from 
	           $(\DD\btp N)\ltimes_{\Delta\boxtimes\alpha}\G$
	           onto $({\bf 1}\btp N)\ltimes_{\Delta\boxtimes\alpha}\G$.
	\item [3.] For any extension $(M, \G, \beta)$ of 
	           $({\bf 1}\btp N, \G, \Delta\boxtimes\alpha)$, the pair 
	           $(M, {\bf 1}\btp N)$ is amenable.
	\end{itemize}
\end{theorem}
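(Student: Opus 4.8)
The statement is Theorem \ref{Del for Kac}, characterizing amenability of an action $\alpha:\G\curvearrowright N$ of a discrete Kac algebra in terms of conditional expectations between crossed products. I would prove the cycle of implications $(1)\Rightarrow(3)\Rightarrow(2)\Rightarrow(1)$, using the machinery already assembled: Lemma \ref{braided product} (equivariant isomorphism $T_\alpha$), Proposition \ref{amenable pair} (amenability of $\alpha$ is equivalent to amenability of the pair $(\DD\btp N,\,{\bf 1}\btp N)$), Theorem \ref{isomorphism} (the equivariant identification of $(\DD\btp N)\ltimes_{\Delta\boxtimes\alpha}\G$ with $B(\DT)\vtp N$ carrying the subalgebra $({\bf 1}\btp N)\ltimes_{\Delta\boxtimes\alpha}\G$ onto $N\ltimes_\alpha\G$), and Lemma \ref{Kac}, which is the Kac-algebra bridge between equivariant conditional expectations on $N$ and ordinary conditional expectations between crossed products. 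The whole point is that Lemma \ref{Kac} lets us drop the equivariance condition in the Kac case, which is exactly what distinguishes this result from the general quantum-group Theorem \ref{characterization1}.

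\textbf{$(1)\Rightarrow(3)$.} Assume $\alpha$ is amenable. By Proposition \ref{amenable pair}, the pair $(\DD\btp N,\,{\bf 1}\btp N)$ is amenable, i.e.\ there is an equivariant conditional expectation $(\DD\btp N,\,\Delta\boxtimes\alpha)\to({\bf 1}\btp N,\,\Delta\boxtimes\alpha)$. Now if $(M,\G,\beta)$ is any extension of $({\bf 1}\btp N,\G,\Delta\boxtimes\alpha)$, I want the pair $(M,\,{\bf 1}\btp N)$ to be amenable. The natural route is Proposition \ref{extension}(1): view $\Delta\boxtimes\alpha$ as amenable on the base algebra (its amenability as an action is what Proposition \ref{amenable pair} and Theorem \ref{relation}-type reasoning deliver), so that any extension inherits amenability of the corresponding pair. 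Concretely, I would invoke Proposition \ref{extension}(1) with the roles played by $({\bf 1}\btp N,\Delta\boxtimes\alpha)$ as the amenable subsystem and $(M,\beta)$ as its extension, producing the desired equivariant conditional expectation $M\to{\bf 1}\btp N$.

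\textbf{$(3)\Rightarrow(2)$.} This is a specialization: the crossed product $(\DD\btp N)\ltimes_{\Delta\boxtimes\alpha}\G$, equipped with the dual action $\widehat{\Delta\boxtimes\alpha}$, is precisely an extension of $({\bf 1}\btp N,\G,\Delta\boxtimes\alpha)$ in the sense of the paper's definition, once one uses that $({\bf 1}\btp N)\ltimes_{\Delta\boxtimes\alpha}\G$ sits inside it as a fixed-point-type subalgebra with a canonical conditional expectation onto it. Applying hypothesis (3) to this particular extension yields an equivariant conditional expectation, hence in particular an ordinary conditional expectation $(\DD\btp N)\ltimes_{\Delta\boxtimes\alpha}\G\to({\bf 1}\btp N)\ltimes_{\Delta\boxtimes\alpha}\G$, which is statement (2).

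\textbf{$(2)\Rightarrow(1)$.} This is where the Kac hypothesis does the real work and where I expect the main obstacle. Given the conditional expectation of (2), transport it through the equivariant isomorphism $\Phi$ of Theorem \ref{isomorphism}: since $\Phi$ maps $(\DD\btp N)\ltimes_{\Delta\boxtimes\alpha}\G$ onto $B(\DT)\vtp N$ and the subalgebra $({\bf 1}\btp N)\ltimes_{\Delta\boxtimes\alpha}\G$ onto $N\ltimes_\alpha\G$, statement (2) is equivalent to the existence of a conditional expectation $B(\DT)\vtp N\to N\ltimes_\alpha\G$. Now I apply Lemma \ref{Kac}, taking its ambient algebra to be $\DD\btp N$ with action $\Delta\boxtimes\alpha$ and its invariant subalgebra ${\bf 1}\btp N$: the lemma's equivalence $(2)\Leftrightarrow(1)$ converts the conditional expectation between crossed products into an \emph{equivariant} conditional expectation $(\DD\btp N,\Delta\boxtimes\alpha)\to({\bf 1}\btp N,\Delta\boxtimes\alpha)$, i.e.\ amenability of the pair $(\DD\btp N,\,{\bf 1}\btp N)$. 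By Proposition \ref{amenable pair} this is exactly amenability of $\alpha$. The delicate point is checking that the crossed products $(\DD\btp N)\ltimes_{\Delta\boxtimes\alpha}\G$ and $({\bf 1}\btp N)\ltimes_{\Delta\boxtimes\alpha}\G$ match, after applying $\Phi$, the objects $B(\DT)\vtp N$ and $N\ltimes_\alpha\G$ to which Lemma \ref{Kac} is literally applied, so that the traciality of $\hat\fee$ in Lemma \ref{Kac} (the Kac hypothesis) is genuinely available; keeping the dual actions and the fixed-point characterizations aligned through these identifications is the step I would write out most carefully.
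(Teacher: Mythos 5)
Your cycle $(1)\Rightarrow(3)\Rightarrow(2)\Rightarrow(1)$ reorders the paper's cycle $(1)\Rightarrow(2)\Rightarrow(3)\Rightarrow(1)$, and two of your three arrows are sound. Your $(1)\Rightarrow(3)$ via Proposition \ref{extension}(1) is correct and in fact cleaner than the paper's route (the paper reaches $(3)$ from $(2)$ through a fairly heavy argument using the associativity (\ref{associative}) of braided tensor products, Theorem \ref{isomorphism}, and Lemma \ref{Kac} a second time); one only has to note that the restriction of $\Delta\boxtimes\alpha$ to ${\bf 1}\btp N$ is conjugate to $\alpha$ via the equivariant embedding $b\mapsto 1\boxtimes b$, so that it is amenable whenever $\alpha$ is. Your $(2)\Rightarrow(1)$ is exactly the paper's use of Lemma \ref{Kac} applied to the invariant subalgebra ${\bf 1}\btp N\subseteq\DD\btp N$, followed by Proposition \ref{amenable pair}; the detour through $\Phi$ is harmless but unnecessary, since Lemma \ref{Kac} applies directly to the crossed products appearing in $(2)$.

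The gap is in $(3)\Rightarrow(2)$. Hypothesis $(3)$ quantifies over extensions $(M,\G,\beta)$ of $({\bf 1}\btp N,\G,\Delta\boxtimes\alpha)$, meaning $\G$-actions $\beta$ restricting to $\Delta\boxtimes\alpha$ on ${\bf 1}\btp N$ together with a conditional expectation onto ${\bf 1}\btp N$; what it delivers is an equivariant conditional expectation from $M$ onto the \emph{base algebra} ${\bf 1}\btp N$. The crossed product $(\DD\btp N)\ltimes_{\Delta\boxtimes\alpha}\G$ equipped with $\widehat{\Delta\boxtimes\alpha}$ is not such an extension: the dual action is an action of $\hat\G$ with $\Deltaop$, not of $\G$, and even granting some identification, the conditional expectation produced by $(3)$ would land in ${\bf 1}\btp N$, which is strictly smaller than the target $({\bf 1}\btp N)\ltimes_{\Delta\boxtimes\alpha}\G$ required by $(2)$. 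The step from ``equivariant conditional expectation onto the base algebra'' to ``conditional expectation between the crossed products'' is precisely the content of Lemma \ref{Kac} (direction $(1)\Rightarrow(2)$ there), which your $(3)\Rightarrow(2)$ never invokes. The repair is short: apply $(3)$ to the canonical extension $(\DD\btp N,\G,\Delta\boxtimes\alpha)$ to conclude that the pair $(\DD\btp N,{\bf 1}\btp N)$ is amenable, hence $\alpha$ is amenable by Proposition \ref{amenable pair} (this is the paper's $(3)\Rightarrow(1)$), and then pass to $(2)$ by Lemma \ref{Kac}. With that correction your argument closes and is, overall, a somewhat more economical proof than the one in the paper.
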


\begin{proof}
	$(1) \Rightarrow (2)$: suppose that $\alpha$ is amenable, then by
	Proposition \ref{amenable pair} the pair $(\DD\btp N, {\bf 1}\btp N)$
	is amenable which means there is an equivariant
	conditional expectation from 
	$(\DD\btp N, \Delta\boxtimes \alpha)$ onto 
	$({\bf 1}\btp N, \Delta\boxtimes \alpha)$. 
	Hence $(2)$ follows by Lemma \ref{Kac}.\\
	$(2) \Rightarrow (3)$: suppose that $(M, \G, \beta)$ is 
	an extension of $({\bf 1}\btp N, \G, \Delta\boxtimes\alpha)$,
	let $q$ be a conditional expectation 
	from $M$ onto ${\bf 1}\btp N$. Then
	$\id\otimes q$ is a conditional expectation  
	from $B(\DT)\vtp M$ onto $B(\DT)\vtp({\bf 1}\btp N)$,
	and thus Theorem \ref{isomorphism} yields 
	a conditional expectation
	\[
	E: (\DD\btp M)\ltimes_{\Delta\boxtimes\beta}\G\to 
	(\DD\btp({\bf 1}\btp N))\ltimes_{\Delta\boxtimes\beta}\G.
	\]
	Moreover by the assumption there is a conditional
	expectation from the crossed product 
	$(\DD\btp N)\ltimes_{\Delta\boxtimes\alpha}\G$
	onto $({\bf 1}\btp N)\ltimes_{\Delta\boxtimes\alpha}\G$.
	By the equality (\ref{associative}), it is equivalent to
	the existence of a conditional
	expectation $E_0$ from
	$\big(\DD\btp ({\bf 1}\btp N)\big)\ltimes_{\Delta\boxtimes\beta}\G$
	onto 
	$\big({\bf 1}\btp ({\bf 1}\btp N)\big)\ltimes_{\Delta\boxtimes\beta}\G$.
	By composing, we obtain the conditional expectation
	\[
	E_0\circ E:(\DD\btp M)\ltimes_{\Delta\boxtimes\beta}\G\to
	\big({\bf 1}\btp ({\bf 1}\btp N)\big)\ltimes_{\Delta\boxtimes\beta}\G.
	\]
	Hence from Lemma \ref{Kac}, we have an
	equivariant conditional expectation $Q$ from 
	$(\DD\btp M, \Delta\boxtimes\beta)$ onto 
	$\big({\bf 1}\btp ({\bf 1}\btp N), \Delta\boxtimes\beta\big)$. Since
	$(M,\G,\beta)$ is an extension of 
	$({\bf 1}\btp N,\G, \Delta\boxtimes\alpha)$, 
	the restriction of $Q$ to $1\btp M$ yields a
    conditional expectation
	$Q_0:\beta(M)\cong{\bf 1}\btp M\to 
	{\bf 1}\btp ({\bf 1}\btp N)\cong\beta({\bf 1}\btp N)$
	such that 
	$(\id\otimes Q_0)(\Delta\boxtimes\beta)=(\Delta\boxtimes\beta)\circ Q_0$.
	Hence for any $a\in M$ we have
	\[
	(\id\otimes Q_0)(\id\otimes\beta)\beta(a)=
	(\id\otimes \beta)Q_0\big(\beta(a)\big).
	\]
	Now define a conditional expectation 
	$P:=\beta^{-1}\circ Q_0\circ \beta$ from
	$M$ onto ${\bf 1}\btp N$. Then for all $a\in M$ 
	we have
	\begin{align*}
		(\id\otimes P)\beta(a)&=\,
		(\id\otimes \beta^{-1})(\id\otimes Q_0)(\id\otimes\beta)\beta(a)\\
		&\,=(\id\otimes \beta^{-1})(\id\otimes\beta)Q_0\big(\beta(a)\big)\\
		&\,=\beta\circ P(a),
	\end{align*}
	which implies 
	$P:(M, \beta)\to ({\bf 1}\btp N, \Delta\boxtimes\alpha)$ 
	is an equivariant 
	conditional expectation. Hence the pair
	$(M, {\bf 1}\btp N)$ is amenable.
	\newline
	$(3) \Rightarrow (1)$: consider the canonical
	extension $(\DD\btp N, \G, \Delta\boxtimes\alpha)$ of the triple
	$({\bf 1}\btp N, \G, \Delta\boxtimes\alpha)$. 
	Then by the assumption the
	pair $(\DD\btp N, {\bf 1}\btp N)$ must be amenable.  
	Hence the action $\alpha$ is 
	amenable by Proposition \ref{amenable pair}.
\end{proof}

\begin{theorem}\label{corollary}
	Let $\alpha:\G\curvearrowright N$ be an action of 
	a discrete Kac algebra $\G$ on a von Neuamnn algebra 
	$N$. Then the following are equivalent:
	\begin{itemize}
		\item [1.] The action $\alpha$ is amenable.
		\item [2.] There is a conditional expectation from 
	           $B(\DT)\vtp N$ onto $N\ltimes_\alpha\G$.
	\end{itemize}
\end{theorem}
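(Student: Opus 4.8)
The plan is to deduce this statement directly from the two results already established for Kac algebra actions, Theorem \ref{Del for Kac} and Theorem \ref{isomorphism}, by transporting a conditional expectation across a $*$-isomorphism. No new analytic input is required; the entire argument is a matter of chaining together equivalences that have already been proved.

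First I would invoke the equivalence of (1) and (2) in Theorem \ref{Del for Kac}, which shows that the action $\alpha$ is amenable if and only if there exists a conditional expectation
\[
(\DD\btp N)\ltimes_{\Delta\boxtimes\alpha}\G \longrightarrow ({\bf 1}\btp N)\ltimes_{\Delta\boxtimes\alpha}\G.
\]
This reduces the claim to a statement purely about the braided crossed products. Next I would apply Theorem \ref{isomorphism}, which furnishes a $*$-isomorphism $\Phi$ carrying $(\DD\btp N)\ltimes_{\Delta\boxtimes\alpha}\G$ onto $B(\DT)\vtp N$ and at the same time carrying the distinguished subalgebra $({\bf 1}\btp N)\ltimes_{\Delta\boxtimes\alpha}\G$ onto $N\ltimes_\alpha\G$. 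Since a $*$-isomorphism transports conditional expectations — if $E$ is a conditional expectation onto the subalgebra, then $\Phi\circ E\circ\Phi^{-1}$ is a conditional expectation onto its image, and the inverse map does the converse — the existence of a conditional expectation between the two braided crossed products is equivalent to the existence of a conditional expectation from $B(\DT)\vtp N$ onto $N\ltimes_\alpha\G$. Composing these two equivalences yields exactly the asserted equivalence of (1) and (2).

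I do not anticipate a genuine obstacle at this stage, precisely because the substantive work lies in the preceding theorems. The only point demanding care is to confirm that $\Phi$ identifies the correct \emph{pair} of algebras, i.e. that it sends both the ambient algebra and the distinguished subalgebra to the intended targets; but this is exactly what Theorem \ref{isomorphism} guarantees. I would also note that statement (2) asks only for a conditional expectation, with no equivariance requirement, so the equivariant condition appearing inside Theorem \ref{Del for Kac} is irrelevant to this final transport step and need not be tracked through $\Phi$.
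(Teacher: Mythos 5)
Your proposal is correct and follows essentially the same route as the paper: the paper's proof likewise invokes the isomorphism of Theorem \ref{isomorphism} identifying the pair $\big((\DD\btp N)\ltimes_{\Delta\boxtimes\alpha}\G,\ ({\bf 1}\btp N)\ltimes_{\Delta\boxtimes\alpha}\G\big)$ with $\big(B(\DT)\vtp N,\ N\ltimes_\alpha\G\big)$ and then cites the equivalence of (1) and (2) in Theorem \ref{Del for Kac}. Your additional remark that no equivariance needs to be tracked is consistent with the paper, since condition (2) of Theorem \ref{Del for Kac} indeed asks only for a plain conditional expectation.
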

\begin{proof}
	By Theorem \ref{isomorphism}, there is an isomorphism
	from $(\DD\btp N)\ltimes_{\Delta\boxtimes\alpha}\G$
	onto $B(\DT)\vtp N$ which maps 
	$({\bf 1}\btp N)\ltimes_{\Delta\boxtimes\alpha}\G$ onto
	$N\ltimes_\alpha\G$. So the Theorem follows by the
	equivalence of (1) and (2) in 
	Theorem \ref{Del for Kac}.	
\end{proof}

\begin{corollary}\label{injectivity of crossed product}
	Let $\alpha:\G\curvearrowright N$ be an action 
	of a discrete Kac algebra 
	$\G$ on a von Neuamnn algebra $N$. 
	Then the following are equivalent:
	\begin{itemize}
		\item [1.] The von Neumann algebra $N$ is injective and the
		           action $\alpha$ is amenable.
		\item [2.] The crossed product $N\ltimes_\alpha\G$ is injective.
	\end{itemize}
\end{corollary}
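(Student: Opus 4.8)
The plan is to reduce everything to Theorem~\ref{corollary} together with two standard permanence properties of injectivity. The first property I will use is that the range of a normal conditional expectation from an injective von Neumann algebra is again injective, obtained by composing the defining expectation of the ambient algebra with the given one. The second is that $B(\DT)\vtp N$ is injective exactly when $N$ is, since $B(\DT)$ is injective and injectivity is preserved under this tensor product.

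For the implication $(1)\Rightarrow(2)$ I would argue as follows. Assuming $N$ is injective, the algebra $B(\DT)\vtp N$ is injective. Since $\alpha$ is amenable, Theorem~\ref{corollary} supplies a normal conditional expectation $E:B(\DT)\vtp N\to N\ltimes_\alpha\G$. Then $N\ltimes_\alpha\G$, being the range of a normal conditional expectation from an injective algebra, is injective.

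For $(2)\Rightarrow(1)$ I would first recover injectivity of $N$ and then amenability of $\alpha$. Because $\G$ is a Kac algebra, the dual $\hat\G$ is compact with tracial Haar state $\hat\fee$, so formula~(\ref{canonical expectation}), $E_{\hat\fee}(x)=(\hat\fee\otimes\id)\hat\alpha(x)$, defines a normal conditional expectation from $N\ltimes_\alpha\G$ onto $\alpha(N)=(N\ltimes_\alpha\G)^{\hat\alpha}$. Since $N\ltimes_\alpha\G$ is injective, its subalgebra $\alpha(N)$ is injective, and as $\alpha$ is a $*$-isomorphism of $N$ onto $\alpha(N)$, the algebra $N$ is injective. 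For amenability, I observe that injectivity of $N\ltimes_\alpha\G$ yields a normal conditional expectation onto it from any containing von Neumann algebra, in particular from $B(\DT)\vtp N\supseteq N\ltimes_\alpha\G$; Theorem~\ref{corollary} then gives amenability of $\alpha$.

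The calculations here are entirely routine; the only genuinely quantum-group-theoretic ingredient is the canonical expectation $E_{\hat\fee}$, whose existence hinges on the traciality of $\hat\fee$, i.e.\ on the Kac property. The point requiring care is to keep track of the two opposite uses of injectivity: in one direction injectivity flows down from $B(\DT)\vtp N$ to the crossed product via the amenability expectation, while in the other it flows back up to produce an expectation defined on all of $B(\DT)\vtp N$, with Theorem~\ref{corollary} being what converts the latter into amenability.
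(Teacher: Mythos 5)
Your proof is correct and takes essentially the same route as the paper's: in one direction amenability plus Theorem~\ref{corollary} pushes injectivity from $B(\DT)\vtp N$ down to the crossed product, and in the other direction injectivity of $N\ltimes_\alpha\G$ supplies the expectation needed to invoke Theorem~\ref{corollary} while the canonical expectation $E_{\hat\fee}$ onto $\alpha(N)\cong N$ recovers injectivity of $N$. The only small inaccuracy is the word \emph{normal}: neither Theorem~\ref{corollary} nor the definition of injectivity guarantees normality of the conditional expectations involved, but your composition argument goes through without it.
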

\begin{proof}
	$(1)\Rightarrow(2)$: if $N$ is injective then so is
	$B(\DT)\vtp N$. If $\alpha$ is amenable, 
	Theorem \ref{corollary} yields
    a conditional expectation from 
	$B(\DT)\vtp N$ onto the crossed product
	$N\ltimes_\alpha\G$. Since $B(\DT)\vtp N$ is
	injective, $N\ltimes_\alpha\G$ is also injective.
	\newline
	$(2)\Rightarrow(1)$: since the crossed product 
	$N\ltimes_\alpha\G$ is injective, there is a conditional expectation
	from $B(\DT)\vtp N$ onto $N\ltimes_\alpha\G$. Therefore 
	by Theorem \ref{corollary}, the action $\alpha$ is amenable. 
	Moreover since there is always the canonical
	conditional expectation from $N\ltimes_\alpha\G$
	on $\alpha(N)$, it follows that $\alpha(N)$ 
	and equivalently 
	$N$, is injective.	
\end{proof}
\section{Amenable actions and crossed products: general case}\label{sect7}

	In this section, we generalize the duality of Corollary
	\ref{corollary} to the setting of discrete quantum group
	actions. For this end, we basically need to show 
	Lemma \ref{Kac} for general 
	discrete quantum groups. 
	Recall that in the proof of the implication $(2)$ to $(1)$
	of Lemma \ref{Kac}, we construct an equivariant conditional 
	expectation from  $(\alpha(N),\Delta\otimes\id)$
	onto $(\alpha(M),\Delta\otimes\id)$ by composing 
	the restriction $E_{|_{\alpha(N)}}$ with
	the canonical conditional expectation $E_{\hat\fee}$.
	In the case of
	discrete Kac algebras, $E_{\hat\fee}$ is
	automatically equivariant
	with respect to the action
	$\Delta\otimes\id$.
	But this is no longer the case in the
	general setting of discrete quantum group actions, 
	since the Haar state $\hat\fee$ is 
	not a trace. To overcome this issue, we impose
	an extra assumption on the conditional
	expectation $E:N\ltimes_\alpha\G\to M\ltimes_\alpha\G$
	to be equivariant with respect to the dual action $\hat\alpha$.
	This would imply that $E$ maps $\alpha(N)$ onto $\alpha(M)$,
	hence use of the canonical conditional expectation
	$E_{\hat\fee}$ is no longer necessary. This inspired 
	by the work of Crann and Neufang in \cite{JN}, where
	they proved a characterization of amenability of the general
	locally compact quantum group $\G$ in terms of covariant 
	injectivity of the dual von Neumann algebra $\LLL$.

\begin{lemma}
	Let $\G$ be a discrete quantum group. Then for any $y\in B(\DT)$
	we have
	\[
	(\id\otimes\Delta^r)\Deltaop(y)=(\Deltaop\otimes\id)\Delta^r(y).
	\]
\end{lemma}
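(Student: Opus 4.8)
The plan is to extend $\Deltaop$ from $\LLL$ to all of $B(\DT)$ through its fundamental unitary and then reduce the identity to a check on a generating set. In the conventions of the paper, ${\hat W}^{\mathrm{op}}\in\LLL\vtp\DD'$ is the fundamental unitary of $\Deltaop$, so the natural extension to $B(\DT)$ is $\Deltaop(y)=({\hat W}^{\mathrm{op}})^*(1\otimes y){\hat W}^{\mathrm{op}}$, which agrees with $\sigma\circ\hat\Delta$ on $\LLL$; similarly $\Delta^r(y)=V(y\otimes 1)V^*$ with $V\in\LLL'\vtp\DD$. Both maps are normal unital $*$-homomorphisms, hence so are the two composites $F:=(\id\otimes\Delta^r)\Deltaop$ and $G:=(\Deltaop\otimes\id)\Delta^r$ from $B(\DT)$ into $B(\DT)\vtp B(\DT)\vtp B(\DT)$. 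Since $\{y: F(y)=G(y)\}$ is a $\sigma$-weakly closed $*$-subalgebra containing the unit, and $B(\DT)=\{\DD\cup\LLL\}''$, it suffices to establish $F=G$ on the two generating subalgebras $\DD$ and $\LLL$.

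The key observation is that the complementary leg supports of ${\hat W}^{\mathrm{op}}$ and $V$ make each coproduct trivial on the other's natural domain. Because the second leg of ${\hat W}^{\mathrm{op}}$ lies in $\DD'$, it commutes with $1\otimes x$ for $x\in\DD$, whence $\Deltaop(x)=1\otimes x$; and because the first leg of $V$ lies in $\LLL'$, it commutes with $\hat x\otimes 1$ for $\hat x\in\LLL$, whence $\Delta^r(\hat x)=\hat x\otimes 1$. On the other hand $\Delta^r$ restricts to the original $\Delta$ on $\DD$ (both $V$ and $W$ implement $\Delta$), and $\Deltaop$ restricts to $\sigma\circ\hat\Delta$ on $\LLL$ by construction.

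With these facts the two generator computations become immediate. For $x\in\DD$ I would compute $F(x)=(\id\otimes\Delta^r)(1\otimes x)=1\otimes\Delta(x)$, while applying $\Deltaop\otimes\id$ to each first-leg factor of $\Delta(x)\in\DD\vtp\DD$ gives $G(x)=1\otimes\Delta(x)$. For $\hat x\in\LLL$ I would compute $F(\hat x)=(\id\otimes\Delta^r)\big(\sigma\hat\Delta(\hat x)\big)=\sigma\hat\Delta(\hat x)\otimes 1$, since $\Delta^r$ sends each second-leg factor of $\sigma\hat\Delta(\hat x)\in\LLL\vtp\LLL$ to a spectator $1$ in the third slot, while $G(\hat x)=(\Deltaop\otimes\id)(\hat x\otimes 1)=\sigma\hat\Delta(\hat x)\otimes 1$. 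Thus $F$ and $G$ agree on $\DD\cup\LLL$, and hence on $B(\DT)$.

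I expect the only genuinely nontrivial inputs to be the three standard structural facts used above — that $\DD$ and $\LLL$ generate $B(\DT)$, that $\Delta^r$ restricts to $\Delta$ on $\DD$, and the precise placements ${\hat W}^{\mathrm{op}}\in\LLL\vtp\DD'$ and $V\in\LLL'\vtp\DD$ — after which the commutation is forced because on each generating algebra exactly one of the two coproducts acts nontrivially. The main point requiring care is the bookkeeping of which tensor leg each factor occupies, so that the ``trivial'' coproduct really does deposit the spectator $1$ in the correct slot; this is what distinguishes the present statement from the false variant in which $\Deltaop$ is extended through $W$ rather than through its own fundamental unitary ${\hat W}^{\mathrm{op}}$.
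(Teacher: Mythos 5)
Your proposal is correct and follows essentially the same route as the paper: both arguments reduce the identity to the generating subalgebras $\DD$ and $\LLL$ (using that the two coproducts are normal $*$-homomorphisms and that $\DD$ and $\LLL$ generate $B(\DT)$), and both obtain the generator computations from the leg placements ${\hat{W}}^{\text{op}}\in\LLL\vtp\DD'$ and $V\in\LLL'\vtp\DD$, which force $\Deltaop(x)=1\otimes x$ on $\DD$ and $\Delta^r(\hat x)=\hat x\otimes 1$ on $\LLL$. The only cosmetic difference is that you phrase the density step via the equalizer of $F$ and $G$ being a von Neumann subalgebra, while the paper invokes the weak*-density of the span of products $x\hat x$.
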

\begin{proof}
	Let $x\in\DD$ and $\hat x\in \LLL$. Since the
	fundamental unitaries ${\hat{W}}^{\text {op}}$ 
	and $V$ lie in $\LLL\vtp\DD'$ and 
	$\LLL'\vtp\DD$, respectively, we have
	\[
	\Deltaop(x)=1\otimes x\hspace{.5cm}{\text{and}}
	\hspace{.5cm}\Delta^r(\hat x)=\hat x\otimes 1.
	\]
	Therefore
	\[
	(\id\otimes\Delta^r)\Deltaop(x)=(\id\otimes\Delta^r)(1\otimes x)
	=1\otimes\Delta^r(x)=(\Deltaop\otimes\id)\Delta^r(x),
	\]
	and
	\[
	(\id\otimes\Delta^r)\Deltaop(\hat x)=\Deltaop(\hat x)\otimes 1
	=(\Deltaop\otimes\id)(\hat x\otimes 1)=
	(\Deltaop\otimes\id)\Delta^r(\hat x).
	\]
	Since the co-multiplications $\Delta^r$ and 
	$\Deltaop$ are homomorphisms, and the linear span of 
	$\{x\hat x: x\in\DD, \hat x \in\LLL\}$ is weak* dense 
	in $B(\DT)$ \cite[Proposition 2.5]{VVAN}, 
	we obtain the desired equality on $B(\DT)$.
\end{proof}
In the following we use the same idea as
\cite[Proposition 4.2]{JN2} to show an automatic equivariant
property with respect to the dual action.
\begin{proposition}\label{Delta-Deltaop}
	Let $\G$ be a discrete quantum group and let $N$ be a von Neumann
	algebra. Then any $(\Delta^r\otimes\id)$-equivariant map
	on $B(\DT)\vtp N$ is automatically 
	$(\Deltaop\otimes\id)$-equivariant.
\end{proposition}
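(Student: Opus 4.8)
The plan is to read $(\Delta^r\otimes\id)$-equivariance of a normal, completely bounded map $E$ on $B(\DT)\vtp N$ (such as a conditional expectation) as the identity
\[
(\Delta^r\otimes\id)E=(\id\otimes E)(\Delta^r\otimes\id),
\]
and to read the sought conclusion as $(\Deltaop\otimes\id)E=(\id\otimes E)(\Deltaop\otimes\id)$; throughout, the first leg is the ``quantum group'' leg on which the comultiplications act, while $E$ acts on the remaining $B(\DT)\vtp N$ legs, and normality of $E$ is what makes the slice maps $\id\otimes E$ available. The argument rests on the preceding Lemma. Tensoring $(\id\otimes\Delta^r)\Deltaop=(\Deltaop\otimes\id)\Delta^r$ with $\id_N$ yields the relation
\[
(\id\otimes\Delta^r\otimes\id)(\Deltaop\otimes\id)=(\Deltaop\otimes\id\otimes\id)(\Delta^r\otimes\id)
\]
of maps from $B(\DT)\vtp N$ into $B(\DT)\vtp B(\DT)\vtp B(\DT)\vtp N$. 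Since $\Delta^r(T)=V(T\otimes 1)V^*$ with $V$ unitary, $\Delta^r$ is an injective normal $*$-homomorphism, hence so is $\id\otimes\Delta^r\otimes\id$; this injectivity is what will permit a cancellation at the end.

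First I would apply the injective map $\id\otimes\Delta^r\otimes\id$ to $(\Deltaop\otimes\id)E(z)$ and rewrite it, using the displayed relation on the argument $E(z)$, as $(\Deltaop\otimes\id\otimes\id)(\Delta^r\otimes\id)E(z)$. Next I invoke the hypothesis to replace $(\Delta^r\otimes\id)E$ by $(\id\otimes E)(\Delta^r\otimes\id)$. Because $\Deltaop$ acts only on the first leg while $E$ acts on the trailing two, these operations commute, so $(\Deltaop\otimes\id\otimes\id)(\id\otimes E)=(\id\otimes\id\otimes E)(\Deltaop\otimes\id\otimes\id)$. Applying the displayed relation once more, now in the reverse direction, turns $(\Deltaop\otimes\id\otimes\id)(\Delta^r\otimes\id)$ back into $(\id\otimes\Delta^r\otimes\id)(\Deltaop\otimes\id)$. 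A final use of the hypothesis on the inner $B(\DT)\vtp N$ legs, in the form $(\id\otimes\id\otimes E)(\id\otimes\Delta^r\otimes\id)=(\id\otimes\Delta^r\otimes\id)(\id\otimes E)$, produces
\[
(\id\otimes\Delta^r\otimes\id)(\Deltaop\otimes\id)E(z)=(\id\otimes\Delta^r\otimes\id)(\id\otimes E)(\Deltaop\otimes\id)(z).
\]
Cancelling the injective map $\id\otimes\Delta^r\otimes\id$ leaves exactly $(\Deltaop\otimes\id)E(z)=(\id\otimes E)(\Deltaop\otimes\id)(z)$, i.e. $(\Deltaop\otimes\id)$-equivariance.

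I expect the \emph{main obstacle} to be purely notational: keeping the leg positions straight across the four rewriting steps, and verifying at each stage that ``$\Delta^r$ (resp. $\Deltaop$) on the first leg commutes with $E$ on the trailing legs.'' The two genuinely structural inputs---the compatibility of $\Delta^r$ and $\Deltaop$ supplied by the Lemma, and the injectivity of $\Delta^r$ (hence of $\id\otimes\Delta^r\otimes\id$) needed for the cancellation---are both immediate; all the real work lies in the bookkeeping of the sandwiching computation.
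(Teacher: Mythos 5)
Your argument is correct and follows essentially the same route as the paper's proof: the identical chain of rewritings using the preceding Lemma twice and the $(\Delta^r\otimes\id)$-equivariance hypothesis twice, with the leg-commutation of $\Deltaop$ past $E$ in between. The only cosmetic difference is the final step --- you cancel the injective normal $*$-homomorphism $\id\otimes\Delta^r\otimes\id$ directly, whereas the paper runs the computation against functionals $f\otimes\tau\otimes\omega\otimes g$ and concludes from the density $\la\T(\DT)*\T(\DT)\ra=\T(\DT)$ of products in the predual; these are equivalent, since injectivity of the normal map $\Delta^r$ is precisely density of the range of its preadjoint.
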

\begin{proof}
    Let $\Phi$ be an equivariant map on
    $(B(\DT)\vtp N, \Delta^r\otimes\id)$.
	Consider normal states $\tau,\omega\in\T(\DT)$,
	$f\in \DO$ and $g\in N_*$. Then for any
	$x\in B(\DT)\vtp N$ we have
	\begin{align*}
\la f\otimes\tau\otimes &\omega\otimes g, (\id\otimes\Delta^r\otimes\id)
	(\id\otimes \Phi)(\Deltaop\otimes\id)(x)\rangle\\
&=\,\la \tau \otimes\omega\otimes g, (\Delta^r\otimes\id)
	\Phi\big((f\otimes\id\otimes\id)(\Deltaop\otimes\id)(x)\big)\rangle\\
&=\,\la \tau \otimes\omega\otimes g,
	(\id\otimes\Phi)\big[(\Delta^r\otimes\id)\big((f\otimes\id\otimes\id)(\Deltaop\otimes\id)(x)\big)\big]\rangle\\
&=\,\la \omega\otimes g, \Phi\big[(\tau \otimes\id\otimes\id)
(\Delta^r\otimes\id)\big((f\otimes\id\otimes\id)(\Deltaop\otimes\id)(x)
\big)\big]\rangle\\
&=\,\la \omega\otimes g, \Phi\big[(f \otimes \tau \otimes\id\otimes\id)
(\id\otimes\Delta^r\otimes\id)\big((\Deltaop\otimes\id)(x)
\big)\big]\rangle\\
&=\,\la \omega\otimes g, \Phi\big[(f \otimes \tau \otimes\id\otimes\id)
(\Deltaop\otimes\id\otimes\id)\big((\Delta^r\otimes\id)(x)
\big)\big]\rangle\\
&=\,\la \omega\otimes g, (f \otimes \tau \otimes\id\otimes\id)
(\Deltaop\otimes\id\otimes\id)\Phi\big((\Delta^r\otimes\id)(x)
\big)\rangle\\
&=\,\la f \otimes \tau\otimes\omega\otimes g, (\Deltaop\otimes\id\otimes\id)
(\Delta^r\otimes\id)\big(\Phi(x)\big)\rangle\\
&=\,\la f \otimes \tau\otimes\omega\otimes g, 
(\id\otimes\Delta^r\otimes\id)(\Deltaop\otimes\id)\big(\Phi(x)\big)\rangle.
\end{align*}
Since $\{(\tau\otimes\omega)\Delta^r: \tau,\omega\in\T(\DT)\}$
spans a dense subset of $\T(\DT)$, see (\ref{traceclass}), it follows
\[
(\id\otimes\Phi)\big((\Deltaop\otimes\id)(x)\big)=
	(\Deltaop\otimes\id)\Phi(x).
\qedhere
\]
\end{proof}
\begin{corollary}\label{dual equivariant}
	Let $\alpha:\G\curvearrowright N$ be an action of a 
	discrete quantum group $\G$ on a
	von Neumann algebra
	$N$ and let $M$ be a von Neumann subalgebra of 
	$N$ which is invariant under
    $\alpha$. If 
    $E:(N\ltimes_\alpha \G,\Delta^r\otimes\id)\to(M\ltimes_\alpha \G,\Delta^r\otimes\id)$
    is an equivariant conditional expectation, then
    $E$ is equivariant with respect to the dual action $\hat \alpha$.
    \end{corollary}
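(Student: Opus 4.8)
The plan is to reduce the statement to the automatic equivariance furnished by Proposition~\ref{Delta-Deltaop}, once the dual action $\hat\alpha$ has been identified with the restriction of $\Deltaop\otimes\id$ to the crossed product. Indeed, with that identification the assertion that $E$ is $\hat\alpha$-equivariant is nothing but the equality $(\id\otimes E)(\Deltaop\otimes\id)=(\Deltaop\otimes\id)E$, and this is precisely what Proposition~\ref{Delta-Deltaop} produces from the assumed $(\Delta^r\otimes\id)$-equivariance of $E$.

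First I would verify that $\hat\alpha$ coincides with $\Deltaop\otimes\id$ on $N\ltimes_\alpha\G$, and likewise on $M\ltimes_\alpha\G$. Since the crossed product is generated as a von Neumann algebra by $\alpha(N)$ and $\LLL\vtp{\bf 1}$, and both $\hat\alpha$ and $\Deltaop\otimes\id$ are normal $*$-homomorphisms, it is enough to compare them on these generators. On $\LLL\vtp{\bf 1}$ we have $(\Deltaop\otimes\id)(\hat x\otimes 1)=\Deltaop(\hat x)\otimes 1$, which is exactly the defining value of $\hat\alpha$. On $\alpha(N)\subseteq\DD\vtp N$ I would invoke the fact, already recorded in the Lemma preceding Proposition~\ref{Delta-Deltaop}, that $\Deltaop(a)=1\otimes a$ for every $a\in\DD$ (a consequence of ${\hat{W}}^{\text{op}}\in\LLL\vtp\DD'$); applying this leg by leg gives $(\Deltaop\otimes\id)(\alpha(x))=1\otimes\alpha(x)$, once more the defining value of $\hat\alpha$. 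The identical computation on $\alpha(M)$ and $\LLL\vtp{\bf 1}$ identifies the dual action on $M\ltimes_\alpha\G$ with $(\Deltaop\otimes\id)|_{M\ltimes_\alpha\G}$.

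With the identification in hand I would simply appeal to Proposition~\ref{Delta-Deltaop}: $E$ is $(\Delta^r\otimes\id)$-equivariant by hypothesis, hence $(\Deltaop\otimes\id)$-equivariant, hence $\hat\alpha$-equivariant. The only point requiring a word of care is that Proposition~\ref{Delta-Deltaop} is phrased for maps on all of $B(\DT)\vtp N$, whereas $E$ is defined on the subalgebra $N\ltimes_\alpha\G$; but its proof is purely formal, using only the equivariance relation for the map, the commutation $(\id\otimes\Delta^r)\Deltaop=(\Deltaop\otimes\id)\Delta^r$ on $B(\DT)$, and the density of $\{(\tau\otimes\omega)\Delta^r:\tau,\omega\in\T(\DT)\}$ in $\T(\DT)$, so it transfers verbatim to $E$ provided both $\Delta^r\otimes\id$ and $\Deltaop\otimes\id$ carry $N\ltimes_\alpha\G$ into $B(\DT)\vtp(N\ltimes_\alpha\G)$. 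For $\Deltaop\otimes\id$ this is the identification of the previous paragraph, and for $\Delta^r\otimes\id$ it is implicit in the hypothesis that $E$ is equivariant for that map. I expect the identification $\hat\alpha=(\Deltaop\otimes\id)|$ on the generators in $\alpha(N)$, where the containment ${\hat{W}}^{\text{op}}\in\LLL\vtp\DD'$ does the essential work, to be the only substantive step; everything else is a direct transfer of the preceding proposition from the ambient algebra to the weak*-closed invariant subalgebra $N\ltimes_\alpha\G$.
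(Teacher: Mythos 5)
Your proposal is correct and follows essentially the same route as the paper: identify the dual action $\hat\alpha$ as the restriction of $\Deltaop\otimes\id$ to the crossed product and then invoke Proposition \ref{Delta-Deltaop}. The extra care you take in checking the identification on generators and in noting that the proposition transfers from $B(\DT)\vtp N$ to the invariant subalgebra is a reasonable elaboration of details the paper leaves implicit, not a different argument.
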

    \begin{proof}
    	Note that the dual action $\hat\alpha$ is the restriction 
    	of $\Deltaop\otimes\id$
    	to the crossed product 
    	$N\ltimes_\alpha \G\subseteq B(\DT)\vtp N$. Hence
    	Proposition \ref{Delta-Deltaop} implies that
    	the conditional expectation $E$ is equivariant with respect
    	to $\hat\alpha$.
    \end{proof}
\begin{lemma}\label{crossed product}
	Let $\alpha:\G\curvearrowright N$ be an action of a 
	discrete quantum group $\G$ on a
	von Neumann algebra
	$N$ and let $M$ be a von Neumann subalgebra of 
	$N$ which is invariant under
    $\alpha$. The following are equivalent:
	\begin{itemize}
		\item [1.] There is an equivariant 
		          conditional expectation $P:(N, \alpha)\to (M, \alpha)$.
		\item [2.] There is an equivariant 
		           conditional expectation 
       \[
	    E:(N\ltimes_\alpha \G, \hat\alpha)\to (M\ltimes_\alpha \G,\hat\alpha). 
	   \]
    \end{itemize}
\end{lemma}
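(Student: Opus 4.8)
The plan is to prove Lemma \ref{crossed product} by reducing it to the Kac-algebra computation in Lemma \ref{Kac}, with the key new input being Corollary \ref{dual equivariant}, which lets us transfer $\hat\alpha$-equivariance and $(\Delta^r\otimes\id)$-equivariance freely. The overall strategy is: for $(1)\Rightarrow(2)$, build $E=\id\otimes P$ exactly as in Lemma \ref{Kac} and check it restricts correctly to the crossed product; for $(2)\Rightarrow(1)$, exploit the extra $\hat\alpha$-equivariance of $E$ to bypass the canonical conditional expectation $E_{\hat\fee}$, whose equivariance failed in the non-tracial case.

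For $(1)\Rightarrow(2)$, I would follow the Kac-algebra argument verbatim: given an equivariant conditional expectation $P:(N,\alpha)\to(M,\alpha)$, set $E=\id\otimes P:B(\DT)\vtp N\to B(\DT)\vtp M$. Using equivariance of $P$, one checks that $E$ commutes with the action $\beta$ of Theorem \ref{characterization}, so $E$ maps the fixed-point algebra $(B(\DT)\vtp N)^\beta=N\ltimes_\alpha\G$ onto $(B(\DT)\vtp M)^\beta=M\ltimes_\alpha\G$. The new point is to verify that this restriction is $\hat\alpha$-equivariant. Since $\hat\alpha$ is the restriction of $\Deltaop\otimes\id$, and $E=\id\otimes P$ manifestly commutes with $\Deltaop\otimes\id$ (the $\Deltaop$ lives on the $B(\DT)$-leg, on which $E$ acts as the identity), equivariance is immediate; alternatively one invokes Corollary \ref{dual equivariant} after noting $E$ is $(\Delta^r\otimes\id)$-equivariant for the same reason.

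For $(2)\Rightarrow(1)$, the essential simplification is that by Corollary \ref{dual equivariant} the hypothesis that $E:(N\ltimes_\alpha\G,\hat\alpha)\to(M\ltimes_\alpha\G,\hat\alpha)$ is $\hat\alpha$-equivariant forces $E$ to respect the fixed-point algebras of $\hat\alpha$. Since $\alpha(N)=(N\ltimes_\alpha\G)^{\hat\alpha}$ and likewise for $M$ (by \cite[Theorem 2.7]{V}), the equivariance guarantees $E(\alpha(N))\subseteq\alpha(M)$. This is precisely what fails automatically in the general (non-Kac) setting, and what the $\hat\alpha$-equivariance hypothesis is designed to supply. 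I would then define $P:=\alpha^{-1}\circ E_{|_{\alpha(N)}}\circ\alpha:N\to M$ and verify $(\id\otimes P)\alpha=\alpha\circ P$. For the equivariance of $P$, I would use that $E$ is $(\Delta\otimes\id)$-equivariant on the crossed product: as in Lemma \ref{Kac}, because $W\in\DD\vtp\LLL$, the identity $(\id\otimes E)(\Delta\otimes\id)(z)=W_{12}^*(1\otimes E(z))W_{12}=(\Delta\otimes\id)E(z)$ holds for all $z\in N\ltimes_\alpha\G$. Combined with $E(\alpha(N))\subseteq\alpha(M)$, a short chain of identities of the form
\[
(\id\otimes E)(\id\otimes\alpha)\alpha(a)=(\id\otimes E)(\Delta\otimes\id)\alpha(a)=(\Delta\otimes\id)E\alpha(a)=(\id\otimes\alpha)E\alpha(a)
\]
yields $(\id\otimes P)\alpha=\alpha\circ P$ after applying $\id\otimes\alpha^{-1}$.

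The main obstacle, and the conceptual heart of the lemma, is the step $E(\alpha(N))\subseteq\alpha(M)$: in the Kac case one produced this by post-composing with $E_{\hat\fee}$ and separately proving $E_{\hat\fee}$ equivariant, which relied crucially on $\hat\fee$ being a trace (the modular-conjugation computation in Lemma \ref{Kac}). Here that computation is unavailable, so the whole point is that the \emph{assumption} of $\hat\alpha$-equivariance — shown automatic from $(\Delta^r\otimes\id)$-equivariance via Proposition \ref{Delta-Deltaop} and Corollary \ref{dual equivariant} — encodes exactly the right invariance to land inside $\alpha(M)$ without any trace hypothesis. Once that containment is secured, the remaining verifications are the same routine identities as in the Kac case.
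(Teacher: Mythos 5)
Your proposal is correct and follows essentially the same route as the paper: for $(1)\Rightarrow(2)$ you restrict $\id\otimes P$ to the crossed product and obtain $\hat\alpha$-equivariance via Corollary \ref{dual equivariant} (or directly), and for $(2)\Rightarrow(1)$ you use $\hat\alpha$-equivariance to deduce $E(\alpha(N))\subseteq\alpha(M)$ from $\alpha(M)=(M\ltimes_\alpha\G)^{\hat\alpha}$ and then run the same $(\Delta\otimes\id)$-equivariance computation as in Lemma \ref{Kac}. This is precisely the paper's argument.
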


\begin{proof}
(1) $\Rightarrow $ (2): similarly as in the proof of Lemma \ref{Kac},
we see that the restriction of 
$\id\otimes P: B(\DT)\vtp N\to B(\DT)\vtp M$ to the crossed product
$N\ltimes_\alpha \G$ yields a conditional
expectation $E$ from $N\ltimes_\alpha \G$ onto $M\ltimes_\alpha \G$.
It is easy to see that $E$ is $(\Delta^r\otimes\id)$-equivariant.
Thanks to Corollary \ref{dual equivariant} the conditional
expectation $E$ is equivariant
with respect to the dual action $\hat\alpha$.
\newline
(2) $\Rightarrow $ (1): suppose that
$E:(N\ltimes_\alpha \G,\hat\alpha)\to 
	(M\ltimes_\alpha \G,\hat\alpha)$ is
	an equivairant conditional expectation. Then for all
	$x\in N$ we have
	\begin{align*}	 
	 1_\LLL\otimes E(\alpha(x)) & =\,
	 (\id\otimes E)\big(1_\LLL\otimes \alpha(x)\big)\\
	 & =\,(\id\otimes E)\circ\hat{\alpha}(\alpha(x))\\
	 & =\,\hat{\alpha}\circ E(\alpha(x)).
    \end{align*} 
	It follows that $E(\alpha(x))$ is in the
	fixed point algebra of the dual action 
	$\hat\alpha$ on
	$M\ltimes_{\alpha}\G$. Hence
	$E(\alpha(N))\subseteq\alpha(M)$.
Now define the conditional expectation 
$P:=\alpha^{-1}\circ E\circ \alpha$ from
$N$ onto $M$. Similarly to the proof
of Lemma \ref{Kac}, we show that 
$(\id\otimes P)\alpha\,=\,\alpha\circ P$.
Since the fundamental unitary $W$ lies in
$\DD\vtp\LLL$, it follows that  the condtional 
expectation $E$ is $(\Delta\otimes\id)$-equivariant.
Now for any $x\in N$ we have
\begin{align*}
(\id \otimes E)(\id \otimes \alpha)\alpha(x) 
&\,=(\id \otimes E)(\Delta \otimes \id)\alpha(x) \\
&\,=(\Delta \otimes \id)E(\alpha(x))\\
&\,=(\id \otimes \alpha)E(\alpha(x)),
\end{align*}
where in the last equality we use that
$E(\alpha(x))\in \alpha(M)$.
Now it follows 
\[
(\id\otimes P)\alpha = (\id \otimes \alpha^{-1})
(\id \otimes E)(\id \otimes \alpha)\alpha = 
E\circ \alpha = \alpha\circ P. \qedhere
\]

\end{proof}

The equivalence of (1) and (3) in the following
result is a noncommutative analogue of 
Zimmer's classical result
\cite[Theorem. 2.1]{ZIM2}.

\begin{theorem}\label{characterization1}
	Let $\alpha:\G\curvearrowright N$ be an action of a 
	discrete quantum group $\G$ on a von Neumann algebra
	$N$. The following are equivalent:
	\begin{itemize}
		\item[1.] The action $\alpha$ is amenable.
		\item[2.] There is an equivariant
		          conditional expectation
	     \[
		 \hspace{1cm}E:\big((\DD\btp N)\ltimes_{\Delta\boxtimes\alpha}\G,
		          \widehat{\Delta\boxtimes\alpha}\big)\to 
		          \big(({\bf 1}\btp N)\ltimes_{\Delta\boxtimes \alpha} \G,
		          \widehat{\Delta\boxtimes\alpha}\big).
		 \]
		\item[3.] There is an equivariant conditional expectation
		\[
		\hspace{1cm}E:\big(B(\DT)\vtp N, \Deltaop\otimes\id)\to
		\big(N\ltimes_\alpha \G, \hat\alpha\big).
		\]
	\end{itemize}
\end{theorem}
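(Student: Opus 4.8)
The plan is to establish the two equivalences $(1)\Leftrightarrow(2)$ and $(2)\Leftrightarrow(3)$ separately, leaning on the structural results already assembled. Essentially all of the genuine work has been pushed into Proposition \ref{amenable pair}, Lemma \ref{crossed product}, and the equivariant isomorphism of Theorem \ref{isomorphism}, so the argument here should reduce to assembling these pieces and keeping track of the relevant actions.

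For $(1)\Leftrightarrow(2)$, I would first invoke Proposition \ref{amenable pair}: amenability of $\alpha$ is equivalent to amenability of the pair $(\DD\btp N, {\bf 1}\btp N)$, that is, to the existence of an equivariant conditional expectation $P:(\DD\btp N,\Delta\boxtimes\alpha)\to({\bf 1}\btp N,\Delta\boxtimes\alpha)$. Since ${\bf 1}\btp N$ is a von Neumann subalgebra of $\DD\btp N$ that is invariant under $\Delta\boxtimes\alpha$, Lemma \ref{crossed product} applies with $N$ replaced by $\DD\btp N$, $M$ by ${\bf 1}\btp N$, $\alpha$ by $\Delta\boxtimes\alpha$, and the dual action by $\widehat{\Delta\boxtimes\alpha}$. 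It converts such a $P$ into an equivariant conditional expectation at the level of crossed products, and conversely; this is precisely statement $(2)$.

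For $(2)\Leftrightarrow(3)$, I would transport along the equivariant $*$-isomorphism $\Phi$ of Theorem \ref{isomorphism}. Recall $\Phi$ carries $\big((\DD\btp N)\ltimes_{\Delta\boxtimes\alpha}\G,\widehat{\Delta\boxtimes\alpha}\big)$ onto $B(\DT)\vtp N$, intertwining $\widehat{\Delta\boxtimes\alpha}$ with the action $\Deltaop\otimes\id$ (whose restriction to $N\ltimes_\alpha\G$ is $\hat\alpha$), and it sends $({\bf 1}\btp N)\ltimes_{\Delta\boxtimes\alpha}\G$ onto $N\ltimes_\alpha\G$. Thus, given $E$ as in $(2)$, the map $\Phi\circ E\circ\Phi^{-1}$ is a conditional expectation of $B(\DT)\vtp N$ onto $N\ltimes_\alpha\G$, and a short computation using the intertwining identity $(\Deltaop\otimes\id)\circ\Phi=(\id\otimes\Phi)\circ\widehat{\Delta\boxtimes\alpha}$ shows it is $(\Deltaop\otimes\id)$-equivariant, yielding $(3)$; the inverse construction $\Phi^{-1}\circ(\,\cdot\,)\circ\Phi$ reverses the passage.

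The conceptual crux, and the reason the statement holds for all discrete quantum groups rather than only Kac algebras, is already absorbed into Lemma \ref{crossed product}: in the general setting the canonical expectation $E_{\hat\fee}$ is no longer $(\Delta\otimes\id)$-equivariant, because $\hat\fee$ is not a trace, so the Kac-algebra argument of Lemma \ref{Kac} cannot be run directly. The remedy was to require equivariance with respect to the dual action and to exploit the automatic upgrade from $\Delta^r\otimes\id$- to $\Deltaop\otimes\id$-equivariance established in Proposition \ref{Delta-Deltaop} and Corollary \ref{dual equivariant}. Consequently, the only point in the present proof that genuinely needs care is matching the dual actions correctly under $\Phi$; once that bookkeeping is done, the equivalences follow formally.
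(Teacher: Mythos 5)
Your proposal is correct and follows essentially the same route as the paper: the paper likewise obtains $(1)\Leftrightarrow(2)$ by combining Proposition \ref{amenable pair} with Lemma \ref{crossed product} applied to the pair $(\DD\btp N,{\bf 1}\btp N)$ with the action $\Delta\boxtimes\alpha$, and obtains $(2)\Leftrightarrow(3)$ by transporting along the equivariant isomorphism of Theorem \ref{isomorphism}. Your additional remarks on conjugating by $\Phi$ and on why Lemma \ref{crossed product} replaces the Kac-algebra argument are accurate elaborations of what the paper leaves implicit.
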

\begin{proof}
	By Theorem \ref{isomorphism}, the statements $(2)$ and $(3)$ are 
	equivalent. To conclude $(1)$ and $(2)$, 
	thanks to Proposition \ref{amenable pair} 
	amenability of $\alpha$
	is equivalent to amenability of the pair
	$(\DD\btp N,{\bf 1}\btp N)$ which by Lemma 
	\ref{crossed product} is equivalent to the existence of a
	$\widehat{\Delta\boxtimes \alpha}$-equivariant 
	conditional expectation $E$ from 
	$(\DD\btp N)\ltimes_{\Delta\boxtimes \alpha}\G$
	onto $({\bf 1}\btp N)\ltimes_{\Delta\boxtimes \alpha}\G$
	\end{proof}

\begin{remark}
	Since the trivial action $\text{tr}:\G\curvearrowright\Bbb C$
	is amenable if and only if $\G$ is amenable, and 
	$\Bbb C\ltimes_\text{tr}\G=\LLL$, the equivalence of $(1)$ and
	$(3)$ in Theorem \ref{characterization1} in fact gives a
	generalization of the main result of \cite{JN}.
\end{remark}

	Suppose that $\beta:\G\curvearrowright K$ is 
	an action of a discrete quantum group $\G$ 
	on a von Neumann algebra $K$. We say that
	$K$ is $\G$-{\emph{injective}} if for every
	unital completely isometric equivariant map
	$\iota:(M, \alpha_1)\to(N, \alpha_2)$ and
	every unital completely positive equivariant map
	$\Psi:(M, \alpha_1)\to(K,\beta)$ there is
	a unital completely positive equivariant map
	$\overline\Psi:(N, \alpha_2)\to(K,\beta)$ 
	such that $\overline\Psi\circ \iota=\Psi$.
\begin{corollary}\label{cor}
	Let $\alpha:\G\curvearrowright N$ be 
	an action of a discrete quantum group $\G$ 
	on a von Neumann algebra $N$. 
	The following are equivalent:
	\begin{itemize}
		\item [1.] The von Neumann algebra $N$ is 
		           injective and the action $\alpha$ is amenable.
		\item [2.] The crossed product $N\ltimes_\alpha\G$
		           is $\hat\G$-injective.
	\end{itemize}
\end{corollary}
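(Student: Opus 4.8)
The plan is to mirror the proof of Corollary \ref{injectivity of crossed product}, but to run the whole argument in the equivariant category so as to retain the dual action $\hat\alpha$: I would replace the plain-injectivity input of Theorem \ref{corollary} by the equivariant characterization of Theorem \ref{characterization1}, and upgrade ordinary injectivity to $\hat\G$-injectivity. First I would isolate two structural facts about $\hat\G$-injectivity. \emph{(Descent)} If $(K,\beta)$ is $\hat\G$-injective and there is an equivariant conditional expectation from $(K,\beta)$ onto a subalgebra $(K_0,\beta_0)$, then $(K_0,\beta_0)$ is again $\hat\G$-injective; this is a routine diagram chase against the defining extension property (compose the given data into $K$, extend, then apply the expectation). \emph{(Ground object)} The algebra $(B(\DT),\Deltaop)$, together with all its amplifications $(B(\DT)\vtp B(H),\Deltaop\otimes\id)$, is $\hat\G$-injective; this is the dual-action analogue of the classical $G$-injectivity of $B(L^2(G))$ and is exactly the input underlying the Crann--Neufang duality recovered in the Remark following Theorem \ref{characterization1}.

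For $(1)\Rightarrow(2)$, I would assume $N$ injective and $\alpha$ amenable. Fixing a faithful representation $N\subseteq B(H_N)$ and a conditional expectation $\pi:B(H_N)\to N$, the map $\id\otimes\pi$ is an equivariant conditional expectation from $(B(\DT)\vtp B(H_N),\Deltaop\otimes\id)$ onto $(B(\DT)\vtp N,\Deltaop\otimes\id)$ (the actions touch only the $B(\DT)$ leg, so they commute with $\pi$). Since the source is $\hat\G$-injective by the ground-object fact, descent shows $(B(\DT)\vtp N,\Deltaop\otimes\id)$ is $\hat\G$-injective. Amenability of $\alpha$ together with Theorem \ref{characterization1} then furnishes an equivariant conditional expectation
\[
E:\big(B(\DT)\vtp N,\Deltaop\otimes\id\big)\to\big(N\ltimes_\alpha\G,\hat\alpha\big),
\]
so a second application of descent yields that $N\ltimes_\alpha\G$ is $\hat\G$-injective.

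For $(2)\Rightarrow(1)$, I would assume $N\ltimes_\alpha\G$ is $\hat\G$-injective. The inclusion $\iota:(N\ltimes_\alpha\G,\hat\alpha)\hookrightarrow(B(\DT)\vtp N,\Deltaop\otimes\id)$ is unital, completely isometric and equivariant, because $\hat\alpha$ is precisely the restriction of $\Deltaop\otimes\id$ to the crossed product (as noted in the proof of Corollary \ref{dual equivariant}). Applying the defining extension property of $\hat\G$-injectivity to $\iota$ and to the identity map on $N\ltimes_\alpha\G$ produces an equivariant conditional expectation from $(B(\DT)\vtp N,\Deltaop\otimes\id)$ onto $(N\ltimes_\alpha\G,\hat\alpha)$, whence $\alpha$ is amenable by Theorem \ref{characterization1}. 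For injectivity of $N$, I would represent $N\ltimes_\alpha\G\subseteq B(\DT)\vtp B(H_N)=B(\DT\otimes H_N)$ and apply the extension property to the equivariant inclusion into the $\hat\G$-injective amplification $(B(\DT)\vtp B(H_N),\Deltaop\otimes\id)$ and the identity; forgetting equivariance, this exhibits $N\ltimes_\alpha\G$ as the range of a conditional expectation from $B(\DT\otimes H_N)$, hence injective. Finally the canonical conditional expectation $E_{\hat\fee}(x)=(\hat\fee\otimes\id)\hat\alpha(x)$ maps $N\ltimes_\alpha\G$ onto its fixed-point algebra $\alpha(N)=(N\ltimes_\alpha\G)^{\hat\alpha}\cong N$, so $N$ is the range of a conditional expectation from an injective algebra and is therefore injective.

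The main obstacle is the ground-object fact that $(B(\DT),\Deltaop)$ and its amplifications are $\hat\G$-injective. Unlike the Kac case, where ordinary injectivity sufficed and $E_{\hat\fee}$ was automatically equivariant, here the entire argument lives in the equivariant category and hinges on having a manifestly $\hat\G$-injective ``largest'' object both to inject into and to descend from. Establishing this is the quantum counterpart of the $G$-injectivity of $B(L^2(G))$, and is where the non-trace character of $\hat\fee$ and the bimodule structure over $\T(\DT)$ (cf. the Remark after Theorem \ref{characterization1}) must be handled with care. The descent property and the reduction of ordinary injectivity to it are comparatively routine once this input is in hand.
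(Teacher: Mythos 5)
Your proposal follows essentially the same route as the paper: both directions hinge on Theorem \ref{characterization1}, with $(1)\Rightarrow(2)$ descending $\hat\G$-injectivity from $B(\DT)\vtp N$ through the equivariant conditional expectation, and $(2)\Rightarrow(1)$ extending the identity map along the equivariant inclusion and then using the canonical expectation onto $\alpha(N)$ to get injectivity of $N$. The only difference is that you make explicit the ``ground object'' fact that $(B(\DT)\vtp B(H),\Deltaop\otimes\id)$ is $\hat\G$-injective and the descent lemma, both of which the paper leaves implicit in its appeal to the proof of Corollary \ref{injectivity of crossed product}.
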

\begin{proof}
(1)$\Rightarrow$(2): the proof is similar to the proof of 
Corollary \ref{injectivity of crossed product}, only that
we use Theorem \ref{characterization1} 
instead of Theorem \ref{corollary}.
\newline
(2)$\Rightarrow$(1): since the crossed product $N\ltimes_\alpha\G$
is $\hat\G$-injective, the identity map on $N\ltimes_\alpha\G$
can be extended to an equivariant conditional
expectation from $(B(\DT)\vtp N, \Deltaop\otimes\id)$
onto $(N\ltimes_\alpha\G, \hat\alpha)$. Hence by
Theorem \ref{characterization1}, the action $\alpha$ is amenable.
Moreover there is always the canonical conditional 
expectation from $N\ltimes_\alpha\G$ onto $\alpha(N)$, it follows
that $N$ is injective.
\end{proof}

\begin{corollary}[\cite{KNR2}, Corollary 2.5]
	Let $\G$ be a discrete quantum group and let $\mu\in\DO$ 
	be a state. The von Neumann algebra crossed product
	$\h_\mu\ltimes_{\Delta_\mu}\G$ is injective.
\end{corollary}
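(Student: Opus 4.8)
The plan is to obtain this as a quick consequence of Theorem \ref{Poisson} and Corollary \ref{cor}, the only substantive step being the passage from $\hat\G$-injectivity, which is what Corollary \ref{cor} provides, to plain injectivity of the crossed product.

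First I would observe that the Poisson boundary $\h_\mu$ is itself injective. Since $\DD=\bigoplus_{i\in I}M_{n_i}(\C)$ is atomic, it is injective, and the conditional expectation $\E:\DD\to\h_\mu$ is a unital completely positive idempotent. Composing it with the embedding $\DD\subseteq B(\DT)$ exhibits $\h_\mu$ as the range of a unital completely positive projection on $B(\DT)$, so $\h_\mu$ with its Choi--Effros product \cite{Choi-Eff} is injective. By Theorem \ref{Poisson} the canonical action $\Delta_\mu:\G\curvearrowright\h_\mu$ is amenable, so item (1) of Corollary \ref{cor} holds with $N=\h_\mu$ and $\alpha=\Delta_\mu$. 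Hence Corollary \ref{cor} yields that $\h_\mu\ltimes_{\Delta_\mu}\G$ is $\hat\G$-injective.

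It then remains to upgrade $\hat\G$-injectivity to injectivity, which is the one point requiring an argument. Recall that the dual action $\widehat{\Delta_\mu}$ is the restriction of $\Deltaop\otimes\id$ to $\h_\mu\ltimes_{\Delta_\mu}\G\subseteq B(\DT)\vtp\h_\mu$ (as used in Corollary \ref{dual equivariant}), so the inclusion
\[
\iota:\big(\h_\mu\ltimes_{\Delta_\mu}\G,\widehat{\Delta_\mu}\big)\hookrightarrow\big(B(\DT)\vtp\h_\mu,\Deltaop\otimes\id\big)
\]
is a unital completely isometric equivariant map. Applying the definition of $\hat\G$-injectivity to $\iota$ and to the identity map of $\h_\mu\ltimes_{\Delta_\mu}\G$ produces an equivariant unital completely positive $E:B(\DT)\vtp\h_\mu\to\h_\mu\ltimes_{\Delta_\mu}\G$ with $E\circ\iota=\id$, that is, a conditional expectation of $B(\DT)\vtp\h_\mu$ onto the crossed product.

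Finally, since $\h_\mu$ and $B(\DT)$ are injective, so is $B(\DT)\vtp\h_\mu$, and the range of a conditional expectation on an injective von Neumann algebra is injective; therefore $\h_\mu\ltimes_{\Delta_\mu}\G$ is injective. I expect the only delicate point to be the extraction of the conditional expectation $E$ from $\hat\G$-injectivity via the equivariant inclusion $\iota$; once it is available the conclusion is formal, mirroring the implication $(1)\Rightarrow(2)$ in the proof of Corollary \ref{injectivity of crossed product}.
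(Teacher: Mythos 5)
Your proof is correct and follows essentially the same route as the paper, whose own proof simply invokes Theorem \ref{Poisson} together with Corollary \ref{cor}. The two points you elaborate --- the injectivity of $\h_\mu$ (as the range of a unital completely positive projection on the injective algebra $\DD$, hence on $B(\DT)$) and the extraction of an honest conditional expectation from $\hat\G$-injectivity via the equivariant inclusion of the crossed product into $\big(B(\DT)\vtp\h_\mu,\Deltaop\otimes\id\big)$ --- are precisely the details the paper leaves implicit, and you handle both correctly.
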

\begin{proof}
	By Theorem \ref{Poisson} the action of $\G$ on its Poisson 
	boundaries is always amenable, and therefore
	the result follows by Corollary \ref{cor}. 
\end{proof}

\begin{remark}
	Crann and Kalantar informed us 
	in a recent unpublished 
	paper they have independently 
	defined a notion of Zimmer amenability
	in the setting of actions of 
	locally compact quantum groups
	on von Neumann algebras, where they 
	used a homological approach.
	But their definition is equivalent 
	to Definition \ref{amenability} in
	the case of discrete quantum groups. 
	They have obtained a similar
	result as Corollary \ref{cor} 
	in that general context. 
\end{remark}

\end{document}